\newtheorem{thm}{Theorem}[section]
\newtheorem{lmm}[thm]{Lemma}
\newtheorem{prp}[thm]{Proposition}
\theoremstyle{definition}
\theoremstyle{remark}
\newtheorem*{rem}{Remark}
\title{An elementary proof of the Voros connection formula for WKB solutions to the Airy
equation with a large parameter}
\dedicatory{Dedicated to Professor Yoshitsugu Takei on his 60th birthday}           
\author{Takashi \textsc{Aoki}\footnote{Kindai University, Kowakae 3-4-1, Higashi-Osaka, 
577-8502, Japan.\newline e-mail: \texttt{aoki@math.kindai.ac.jp}},
          Takao \textsc{Suzuki}\footnote{Kindai University, Kowakae 3-4-1, Higashi-Osaka, 
577-8502, Japan.\newline e-mail: \texttt{suzuki@math.kindai.ac.jp}}
~and Shofu \textsc{Uchida}\footnote{Kindai University, Kowakae 3-4-1, Higashi-Osaka, 
577-8502, Japan.\newline e-mail: \texttt{uchida@math.kindai.ac.jp }}}
\keywords{\textit{the Voros connection formula, Airy function, Pearcey integral, WKB solutions, resurgence}}         
\begin{document}
%

\maketitle

\begin{abstract}      
The Voros connection formula for WKB solutions to the Airy equation with a large 
parameter is proved by using cubic equations. 
Some parts of the results are generalized to the Pearcey system, which is a two-variable
version of the Airy equation, is given.
\end{abstract}

\section{Introduction}
In \cite{akt0}, the Voros connection formula (cf. \cite{V}) for the WKB solutions to Schr\"odinger-type ordinary
differential equations is proved from the viewpoint of microlocal analysis \cite{K3}. The proof consists of
two parts. In the first part, the Schr\"odinger-type ordinary differential equation with an analytic potential 
is formally transformed to
the Airy equation with a large parameter near a simple turning point. The formal series appearing
in the transformation can be justified by using microdifferential operators. In the second part, 
the Voros connection formula for the Airy equation is proved by computing the Borel transform
of the WKB solutions directly in terms of the Gauss hypergeometric functions.
The classical connection formulas for the hypergeometric functions yield the Voros formula
for the Airy equation. Combining these two parts, we have the Voros formula for general
equations. (See \cite{KT} also.)

In this article, we focus on the second part. We observe that the parameters in the hypergeometric functions
used in the proof in \cite{akt0, KT} are contained in the Schwarz list \cite{Sch}. In other words, they are algebraic functions.
Our aim is to prove the connection formula for the WKB solutions to the Airy equation by using analytic continuation of
algebraic functions, not of the hypergeometric functions. This point of view gives some generalization. 
We find a similar structure in the Pearcey system (cf. \cite{OK}) with a large parameter. This system is a natural
generalization of the Airy equation to the two-variable case. 
We see that the Borel transforms of the suitably normalized WKB solutions to the
Pearcey system are also algebraic functions. Hence such WKB solutions
are resurgent. We hope that this example provides a part of basics of the prospective theory for the 
exact WKB analysis of holonomic systems.

\section{The Airy equation with a large parameter and its WKB solutions}
The differential equation
\begin{equation}\label{2:Airy-0}
\left(-\frac{d^{2}}{dz^{2}}+z\right)w=0
\end{equation}
is called the Airy equation (cf. \cite{O}). 
Airy used a solution to this equation expressed by an integral, known as the ``Airy function'' nowadays (see \S~3), 
effectively in his theory of  the rainbow (caustics)
\cite{Ai}.
We introduce a positive large parameter $\eta$ by setting
$z=\eta^{2/3}x, \, \psi(x,\eta)=w(\eta^{2/3}x)$. Then we have 
a differential equation of the form
\begin{equation}\label{2:Airy-1}
\left(-\frac{d^{2}}{dx^{2}}+\eta^{2}x \right)\psi=0.
\end{equation}
We call this the Airy equation with the large parameter, or simply, the Airy equation. 
This equation plays a fundamental role in the exact WKB analysis.
Let $S$ denote the logarithmic derivative
of the unknown function $\psi$, namely
$\displaystyle
S=\frac{d}{dx}\log\psi.
$
Then $S$ should satisfy the following Riccati-type equation:
\begin{equation}\label{2:Riccati}
\frac{dS}{dx}+S^{2}=\eta^{2}x.
\end{equation}
This equation has a formal solution $\displaystyle S=\sum_{j=-1}^{\infty}\eta^{-j}S_{j}$
defined by the recurrence relation
\begin{equation}\label{2:recur}
\left\{
\begin{split}
\ \ S_{-1}^{2}&=x,\\
S_{j+1}&=-\frac{1}{2S_{-1}}\left(\frac{dS_{j}}{dx}+\sum_{k=0}^{j}S_{k}S_{j-k}\right)
\quad (j=-1,0,1,2,3,\dots).
\end{split}
\right.
\end{equation}
We consider the exponential of the integral of $S$:
\[
\psi=\exp\left(\int Sdx\right),
\]
which formally satisfies \eqref{2:Airy-1}. We call this a WKB solution to \eqref{2:Airy-1}. 
We easily see that
\[
S_{-1}=x^{1/2},\ S_{0}=-\frac{1}{4x}, \,\dots\, 
\]
and if we fix the branch of the square root, say, as $x^{1/2}>0$ for $x>0$, we have a formal solution
$S^{(+)}$. Another choice of the branch also gives 
a formal solution $S^{(-)}$.
If we set 
\[
S_{\rm odd}=\sum_{j=-1}^{\infty}\eta^{-2j-1}S_{2j+1}=\eta x^{1/2}-\eta^{-1}\frac{5}{32}x^{-5/2}-\eta^{-3}\frac{1105}{2048}x^{-11/2}-
\cdots,
\]
\[
S_{\rm even}=\sum_{j=0}^{\infty}\eta^{-2j}S_{2j}=-\frac{1}{4x}-\eta^{-2}\frac{15}{64}x^{-4}-\frac{1695}{1024}x^{-7}+\cdots,
\]
then we have
\[
S^{(\pm)}=\pm S_{\rm odd}+S_{\rm even}
\]
satisfies \eqref{2:Riccati} and
\[
S_{\rm even}=-\frac{1}{2}\frac{d}{dx}\log S_{\rm odd}
\]
holds. Hence we may take $-1/2\log S_{\rm odd}$ as a primitive of $S_{\rm even}$ and have 
the special WKB solutions of the form
\begin{equation}\label{2:wkbsol-1}
\psi_{\pm}=\frac{1}{\sqrt{S_{\rm odd}}}\exp\left(\pm\int_{0}^{x}S_{\rm odd}dx\right).
\end{equation}
Here the integral is defined by one half of the term-by-term contour integral
of $S_{\rm odd}$ starting from $x$ on the second sheet of the Riemann surface of $\sqrt{x}$, 
going around the origin counterclockwise and back to the $x$ on the first sheet:
\[
\int_{0}^{x}S_{\rm odd}dx=\eta\frac{2}{3} x^{3/2}+\eta^{-1}\frac{5}{48}x^{-3/2}+
\eta^{-3}\frac{1105}{9216}x^{-9/2}+\cdots.
\]
We observe that $S_{\rm odd}$ has the weighted homogeneity
\[
S_{\rm odd}(\lambda^{2}x,\lambda^{-3}\eta)=\lambda^{-2}S_{\rm odd}(x,\eta).
\]
Hence $\psi_{\pm}$ satisfies
\[
\psi_{\pm}(\lambda^{2}x,\lambda^{-3}\eta)=\lambda\psi_{\pm}(x,\eta).
\]
This relation and \eqref{2:Airy-1} imply that $\psi_{\pm}$ are formal solutions to
the following system of partial differential equations in the variable $(x, \eta)$:
\begin{equation}\label{2:Airy-system}
\left\{
\begin{split}
&\ \ \left(-\frac{\partial^{2}}{\partial x^{2}}+\eta^{2}x\right)\psi=0,\\
&\ \ \left(2x\frac{\partial}{\partial x}-3\eta\frac{\partial}{\partial \eta}-1\right)\psi=0.
\end{split}\right.
\end{equation}

\section{Integral representation}
There are two standard solutions to \eqref{2:Airy-0}, which are called the Airy functions \cite{O}:
\begin{align}\label{3:Ai-def}
{\rm Ai}(z)&=\frac{1}{2\pi i}\int_{\gamma_{1}}\exp\left(-z\xi+\frac{\xi^{3}}{3}\right)d\xi,\\ \label{3:Bi-def}
{\rm Bi}(z)&=\frac{1}{2\pi}\int_{-\gamma_{2}+\gamma_{3}}\exp\left(-z\xi+\frac{\xi^{3}}{3}\right)d\xi.
\end{align}
Here the paths $\gamma_{j}\ (j=1,2,3)$ are taken as shown in Fig. 3.1. 
More precisely, the arguments of three half-line asymptotes of the paths are $\pm\pi/3, \pi$.
\begin{center}
\includegraphics[width=50mm]{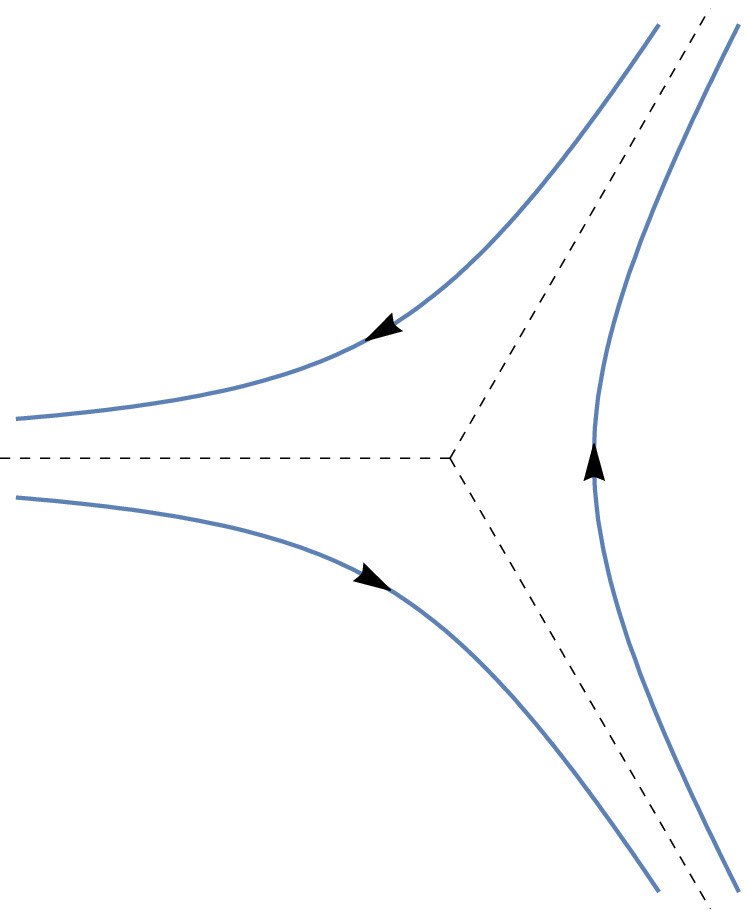}\\
Figure 3.1.
\end{center}

\vspace{-50mm}
\hspace*{53mm}$\gamma_{2}$

\vspace{5mm}
\hspace*{74mm}$\gamma_{1}$

\vspace{5mm}
\hspace*{52mm}$\gamma_{3}$

\vspace{25mm}
\noindent Hence the integrals
\begin{equation}\label{3:varphij}
\varphi_{j}=\int_{\gamma_{j}}\exp\left(\eta\left(\frac{t^{3}}{3}-xt\right)\right)dt\quad(j=1,2,3)
\end{equation}
are solutions to \eqref{2:Airy-1} satisfying $\varphi_{1}+\varphi_{2}+\varphi_{3}=0$.
Here $\gamma_{j}$'s are suitably deformed. 
We can see that $\varphi_{j}$ have the same homogeneity as that of $\psi_{\pm}$, namely,
\[
\varphi_{j}(\lambda^{2}x,\lambda^{-3}\eta)=\lambda\varphi_{j}(x,\eta).
\]
This implies $\psi=\varphi_{j}$ ($j=1,2,3$) satisfy \eqref{2:Airy-system}.
\begin{rem}
The Airy functions are expressed in terms of $\varphi_{j}$ as
\begin{equation}\label{3:AiBi-varphi}
{\rm Ai}(\eta^{2/3}x)=\frac{\eta^{1/3}}{2\pi i}\varphi_{1}(x,\eta),\quad
{\rm Bi}(\eta^{2/3}x)=\frac{\eta^{1/3}}{2\pi}(\varphi_{3}(x,\eta)-\varphi_{2}(x,\eta)).
\end{equation}
\end{rem}
We rewrite \eqref{3:varphij} by setting $t^{3}/3-xt=-y$:
\begin{equation}\label{3:int-1}
\varphi_{j}=\int_{c_{j}}g(x,y)\exp(-y\eta )dy.
\end{equation}
Here $c_{j}$ is the image of $\gamma_{j}$ by the mapping $t\mapsto y$ and
\[
g(x,y)=\left.\frac{1}{x-t^{2}}\right|_{t=t(x,y)}.
\]
The branch of the root $t=t(x,y)$ of the cubic equation 
\begin{equation}\label{3:eqfort}
t^{3}/3-xt=-y
\end{equation} 
is taken suitably (see \S \ref{6:section}). We note that \eqref{3:int-1} looks like the Laplace integral
defining the Borel sum of WKB solutions (see \eqref{5:PsiJ}) and hence $g$ is expected to have some
relation with the Borel transform of WKB solutions.
\begin{lmm}\label{3:Lemma1}
The function $g=g(x,y)$ defined as above satisfies the cubic equation
\begin{equation}\label{3:cubic1}
(9y^{2}-4 x^{3})g^{3}+3 x g+1=0.
\end{equation}
\end{lmm}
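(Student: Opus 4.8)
The plan is to eliminate the branch variable $t=t(x,y)$ between the defining relation $g=1/(x-t^2)$ and the cubic equation $t^3/3-xt=-y$, and show that the resulting algebraic relation for $g$ is exactly \eqref{3:cubic1}. First I would rewrite the defining relation as $x-t^2=1/g$, i.e. $t^2 = x - 1/g$. The strategy is then to produce a second expression for $t^2$ (or for $t$) purely in terms of $x$ and $y$, using the cubic \eqref{3:eqfort}, and match the two.

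The key computational step is to use \eqref{3:eqfort} to express $t$ rationally in terms of $t^2$, $x$, and $y$. Multiplying \eqref{3:eqfort} by $3$ gives $t^3 - 3xt = -3y$, hence $t(t^2-3x) = -3y$, so
\[
t = \frac{-3y}{t^2-3x}.
\]
Now substitute $t^2 = x - 1/g$. The denominator becomes $t^2-3x = -2x - 1/g = -(2xg+1)/g$, so
\[
t = \frac{-3y}{-(2xg+1)/g} = \frac{3yg}{2xg+1}.
\]
Squaring this and equating with $t^2 = x - 1/g = (xg-1)/g$ yields
\[
\frac{9y^2g^2}{(2xg+1)^2} = \frac{xg-1}{g},
\]
and clearing denominators gives $9y^2 g^3 = (xg-1)(2xg+1)^2$. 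Expanding the right-hand side, $(2xg+1)^2 = 4x^2g^2+4xg+1$, so $(xg-1)(4x^2g^2+4xg+1) = 4x^3g^3 + 4x^2g^2 + xg - 4x^2g^2 - 4xg - 1 = 4x^3g^3 - 3xg - 1$. Therefore $9y^2g^3 = 4x^3g^3 - 3xg - 1$, which rearranges precisely to $(9y^2-4x^3)g^3 + 3xg + 1 = 0$, as claimed.

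The only real obstacle is bookkeeping about branches and the exceptional loci: the manipulation $t=3yg/(2xg+1)$ requires $2xg+1\neq 0$ and $g\neq 0$, and one should check that the critical set $9y^2-4x^3=0$ (the caustic, where the map $t\mapsto y$ is singular and two roots $t$ collide) and the point $x=0$ do not cause genuine problems. Since $g$ is a well-defined analytic function on an open set (away from these loci) by its construction in \S\ref{6:section}, and both sides of \eqref{3:cubic1} are polynomials in $g,x,y$, the identity, once verified on the open dense set where all the above steps are legitimate, extends by continuity to the closure; no separate branch analysis is needed for the statement of the lemma itself. I would close by remarking that $g$ is, accordingly, an algebraic function of $(x,y)$ of degree at most three, which is the structural fact exploited in the rest of the paper.
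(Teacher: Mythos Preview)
Your proof is correct and follows exactly the approach of the paper: eliminate $t$ from the two relations $(x-t^{2})g=1$ and $t^{3}/3-xt=-y$. The paper's proof is a single sentence to that effect, whereas you have written out the elimination in full; your closing paragraph on exceptional loci is more than the lemma requires, but is not wrong.
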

\begin{proof}
Eliminating $t$ from the relations
\[
(x-t^{2})g=1,\quad \frac{t^{3}}{3}-xt=-y,
\]
we have \eqref{3:cubic1}.
\end{proof}
\begin{prp}\label{3:sys-for-g}
The algebraic function $g$ defined by the cubic equation \eqref{3:cubic1}
satisfies the system of
partial differential equations
\begin{equation}\label{3:sysB2}
\left\{
\begin{split}
\ \ &\left(-\frac{\partial^{2}}{\partial x^{2}}+x\frac{\partial^{2}}{\partial y^{2}}\right)g=0,\\
\ \ &\left(2x\frac{\partial}{\partial x}+3y\frac{\partial}{\partial y}+2\right)g=0.
\end{split}\right.
\end{equation}
\end{prp}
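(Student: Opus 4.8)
The plan is to verify the two identities in \eqref{3:sysB2} directly on the Riemann surface of the cubic \eqref{3:cubic1}, the efficient device being to reintroduce the auxiliary variable $t$. Recall from the construction of $g$ that $g = 1/(x-t^{2})$, where $t=t(x,y)$ is the chosen root of $t^{3}/3-xt=-y$; equivalently $y = xt-t^{3}/3$. Hence $(x,t)$ may be used as local coordinates, $y$ becomes a function of $(x,t)$ with $\partial y/\partial t = x-t^{2} = 1/g$ and $\partial y/\partial x = t$, and the chain rule gives, for any function $h$,
\[
h_{y} = g\,\frac{\partial h}{\partial t}\Big|_{x}, \qquad
h_{x} = \frac{\partial h}{\partial x}\Big|_{t} - t\,g\,\frac{\partial h}{\partial t}\Big|_{x}.
\]
In these coordinates $g=(x-t^{2})^{-1}$, so $\partial g/\partial x|_{t}=-g^{2}$ and $\partial g/\partial t|_{x}=2tg^{2}$ are immediate.

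The second equation of \eqref{3:sysB2} is the infinitesimal form of a weighted homogeneity. Assigning weights $2$ to $x$ and $3$ to $y$, the substitution $(x,y,g)\mapsto(\lambda^{2}x,\lambda^{3}y,\lambda^{-2}g)$ leaves \eqref{3:cubic1} invariant; since the three branches are thereby merely rescaled as a set (no coalescence of roots occurs), analytic continuation in $\lambda$ from $\lambda=1$ forces $g(\lambda^{2}x,\lambda^{3}y)=\lambda^{-2}g(x,y)$ for the branch in hand. Differentiating at $\lambda=1$ gives $(2x\partial_{x}+3y\partial_{y}+2)g=0$. Equivalently, in the $(x,t)$ coordinates $g$ is weighted-homogeneous of degree $-2$ with $\mathrm{wt}(t)=1$, and the identity is just the Euler relation.

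For the first equation I would compute. The formulas above give $g_{y}=2tg^{3}$ and $g_{x}=-g^{2}-2t^{2}g^{3}$, and iterating them gives $g_{yy}=2g^{4}+12t^{2}g^{5}$ and $g_{xx}=2g^{3}+14t^{2}g^{4}+12t^{4}g^{5}$. Substituting $x=t^{2}+1/g$ into $x\,g_{yy}$ reproduces $g_{xx}$ exactly, so $(-\partial_{x}^{2}+x\,\partial_{y}^{2})g=0$. The only point to watch is the bookkeeping of the $t$-derivatives; there is no genuine obstacle here, only a short computation.

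Neither identity is accidental: \eqref{3:sysB2} is precisely what the Airy system \eqref{2:Airy-system} becomes under the Laplace correspondence $\eta^{2}\leftrightarrow\partial_{y}^{2}$ and $-\eta\,\partial_{\eta}\leftrightarrow\partial_{y}\circ(y\,\cdot\,)$ applied to the representation \eqref{3:int-1}, since $\varphi_{j}=\int_{c_{j}}g\,e^{-y\eta}\,dy$ already solves \eqref{2:Airy-system}. One could instead argue this way: rewrite each Airy operator applied to $\varphi_{j}$, after integration by parts in $y$ (the boundary terms vanish because $\mathrm{Re}(-y\eta)\to-\infty$ along the ends of $c_{j}$ and $g$ decays there), as $\int_{c_{j}}h_{j}\,e^{-y\eta}\,dy$ with $h_{j}$ equal to $-g_{xx}+x\,g_{yy}$, respectively $2x\,g_{x}+3y\,g_{y}+2g$, and then conclude that $h_{j}\equiv 0$. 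I expect the real difficulty in that route to be justifying this last step rigorously — controlling the boundary contributions and invoking injectivity of the Laplace transform over the relevant cycles — which is why the direct verification above is the cleaner one to carry out.
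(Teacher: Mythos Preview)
Your proof is correct. Both you and the paper verify the two identities by direct computation, but via different routes. The paper differentiates the cubic \eqref{3:cubic1} implicitly in $x$ and in $y$ to express $g_{x}$, $g_{y}$, $g_{xx}$, $g_{yy}$ as rational functions of $(x,y,g)$, and then checks that each left-hand side of \eqref{3:sysB2} reduces to zero modulo \eqref{3:cubic1}. You instead pull back to the $(x,t)$-chart underlying Lemma~\ref{3:Lemma1}, where $g=(x-t^{2})^{-1}$ is explicit and all the partial derivatives become monomials in $t$ and $g$; the cancellation in $-g_{xx}+x\,g_{yy}$ is then immediate once $x=t^{2}+1/g$ is substituted. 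For the Euler equation you give the conceptual argument from the weighted homogeneity of \eqref{3:cubic1}, which is more illuminating than the paper's uniform recipe of implicit differentiation, and your care about following a single branch as $\lambda$ varies is exactly the point one must address. Your final paragraph on the Laplace-transform route and its injectivity subtlety is a fair assessment but plays no role in the verification itself.
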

\begin{proof}
We can express
$g_{x}$, $g_{xx}$, $g_{y}$ and $g_{yy}$ in terms of $g$ by differentiating \eqref{3:cubic1} twice in $x$ and in $y$.
Putting them into the left-hand sides of \eqref{3:sysB2} and using \eqref{3:cubic1},
we see they vanish.
\end{proof}
We observe that the characteristic variety of the system \eqref{3:sysB2} is the conormal
bundle of the curve $4x^{3}-9 y^{2}=0$ and hence the system is holonomic. 
The second equation of \eqref{3:sysB2} implies that the unknown function can be written in the form
\[
g(x,y)=\frac{1}{x}h\left(\frac{y}{x^{3/2}}\right)
\]
by using a function $h$ of one variable. It is easy to find a second-order ordinary differential equation for $h$
by using the first equation.
This implies the rank of the holonomic system equals 2. Hence we have
\begin{thm}\label{3:thm-for-g}
Let $g_{j}\ (j=1,2,3)$ be three branches of the algebraic function $g$ defined
by the cubic equation \eqref{3:cubic1}. 
Then any two of them form a basis of the analytic solution space of the holonomic system
\begin{equation}\label{3:sys-for-g}
\left\{\ 
\begin{split}
\ \ &\left(-\frac{\partial^{2}}{\partial x^{2}}+x\frac{\partial^{2}}{\partial y^{2}}\right)u=0,\\
\ \ &\left(2x\frac{\partial}{\partial x}+3y\frac{\partial}{\partial y}+2\right)u=0.
\end{split}\right.
\end{equation}
\end{thm}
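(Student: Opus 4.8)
The plan is to combine the rank computation already carried out before the statement with a short algebraic argument showing that two distinct branches of $g$ can never be proportional. From the discussion preceding the theorem, the holonomic system \eqref{3:sys-for-g} has rank $2$, so its space of analytic solutions near a generic point is two-dimensional; and by the preceding proposition (or directly by Lemma~\ref{3:Lemma1} together with the substitution in its proof) every branch $g_j$ belongs to that space. Hence it suffices to show that any two of $g_1,g_2,g_3$ are linearly independent over $\mathbb{C}$: two independent elements of a two-dimensional space are automatically a basis.

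Before that I would record two elementary facts about \eqref{3:cubic1} viewed as a cubic in $g$. First, no branch is identically zero, since the constant term is $1$. Second, its discriminant is $\Delta=-243\,y^{2}(9y^{2}-4x^{3})$, which is not identically zero; therefore outside the curve $\Delta=0$ the cubic has three simple roots, so the three branches are pairwise distinct as germs at a generic point. As a byproduct, the absence of a $g^{2}$-term gives $g_1+g_2+g_3=0$, mirroring $\varphi_1+\varphi_2+\varphi_3=0$ and confirming that the triple is not independent.

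The core step is to rule out a relation $\alpha g_i+\beta g_j=0$ with $i\neq j$ and $(\alpha,\beta)\neq(0,0)$. If $\alpha=0$ or $\beta=0$ one branch would vanish identically, which is excluded; so $g_i=c\,g_j$ for some $c\in\mathbb{C}^{\times}$. Then both $g_j$ and $c\,g_j$ satisfy \eqref{3:cubic1}, and eliminating the term $g_j^{3}$ between the two identities gives $3x\,c(1-c^{2})\,g_j=c^{3}-1$. If $c=-1$ this reads $0=-2$, absurd; if $c\neq\pm1$ it exhibits $g_j$ as a function of $x$ alone, contradicting the genuine dependence of $g_j$ on $y$ (clear from \eqref{3:cubic1}, or from the form $g=x^{-1}h(y/x^{3/2})$ with $h$ nonconstant). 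Hence $c=1$, i.e.\ $g_i=g_j$, contradicting their distinctness, so any two branches are linearly independent and form a basis of the solution space. I expect the only point requiring care to be the bookkeeping in this last step — performing the elimination cleanly and dispatching the degenerate subcases $c=\pm1$ — while the structural input, namely that the solution space is exactly two-dimensional, is already in hand from the preceding discussion.
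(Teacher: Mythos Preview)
Your proof is correct and follows the paper's strategy: both rely on the rank-$2$ computation carried out just before the theorem, together with the preceding proposition placing each branch $g_j$ in the solution space. The paper in fact stops at that point and simply states the theorem, leaving the linear independence of any two branches implicit; your explicit algebraic elimination showing that $g_i = c\,g_j$ is impossible for distinct branches supplies a step the paper omits.
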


\section{The Borel transform of WKB solutions}
Let $\psi_{\pm, B}$ denote the Borel transform of $\psi_{\pm}$. Explicitly, $\psi_{\pm}$ have the forms
\begin{align*}
\psi_{\pm}&=\eta^{-\frac{1}{2}}x^{-\frac{1}{4}}\left(1-\frac{5}{32}\eta^{-2}x^{-3}
-\frac{1105}{2048}\eta^{-4}x^{-6}-\cdots\right)^{-\frac{1}{2}}\\ 
&\hspace{10mm}\times
\exp\left(\pm\left(\eta\frac{2}{3} x^{\frac{3}{2}}+\eta^{-1}\frac{5}{48}x^{-\frac{3}{2}}+
\eta^{-3}\frac{1105}{9216}x^{-\frac{9}{2}}+\cdots\right)\right).
\end{align*}
Setting 
\[
A=-\frac{5}{32}x^{-3}
-\frac{1105}{2048}\eta^{-2}x^{-6}-\cdots
\]
and 
\[
B=\frac{5}{48}x^{-\frac{3}{2}}+
\eta^{-2}\frac{1105}{9216}x^{-\frac{9}{2}}+\cdots,
\]
we may rewrite them as
\begin{align*}
&\eta^{-\frac{1}{2}}x^{-\frac{1}{4}}(1+\eta^{-2}A(x,\eta))^{-\frac{1}{2}}\exp\left(\pm \eta\frac{2}{3}x^{\frac{3}{2}}\right)\exp(\pm\eta^{-1}B(x,\eta))\\
&=\eta^{-\frac{1}{2}}x^{-\frac{1}{4}}\exp\left(\pm \eta\frac{2}{3}x^{\frac{3}{2}}\right)\left(1-\frac{1}{2}\eta^{-2}A-\frac{3}{8}\eta^{-4}A^{2}+\cdots\right)\\
&\hspace{50mm}\times\left(1\pm \eta^{-1}B+\eta^{-2}\frac{1}{2}B^{2}
\pm\cdots\right)\\
&=\eta^{-\frac{1}{2}}x^{-\frac{1}{4}}\exp\left(\pm \eta\frac{2}{3}x^{\frac{3}{2}}\right)
(1+b_{1}^{\pm}\eta^{-1}x^{-\frac{3}{2}}+b_{2}^{\pm}(\eta^{-1}x^{-\frac{3}{2}})^{2}+\cdots)
\end{align*}
with some constants $b_{j}^{\pm}$. Then $\psi_{\pm, B}$ can be written in the form
\begin{align}\label{4:psipmB-form}
\psi_{\pm,B}(x,y)&=\frac{1}{x\sqrt{\pi}}\left(\frac{y}{x^{\frac{3}{2}}}\pm\frac{2}{3}\right)^{-\frac{1}{2}} 
\left\{1+\frac{\sqrt{\pi}{b}_{1}^{\pm}}{\Gamma(\frac{3}{2})}\left(\frac{y}{x^{\frac{3}{2}}}\pm\frac{2}{3}\right)
+\cdots\right.\\ \nonumber
&\hspace{35mm}\cdots+\left.\frac{\sqrt{\pi}{b}_{j}^{\pm}}{\Gamma(j+\frac{1}{2})}\left(\frac{y}{x^{\frac{3}{2}}}\pm\frac{2}{3}\right)^{j}+\cdots\right\}.
\end{align}
We introduce a new variable $\displaystyle s=\frac{3y}{4x^{\frac{3}{2}}}+\frac{1}{2}$. Then we may rewrite $\psi_{\pm,B}$
as follows:
\begin{align}\label{4:psi+B}
\psi_{+,B}&=\frac{\sqrt{3}}{2\sqrt{\pi}x}s^{-\frac{1}{2}}
\left(1+\tilde{b}_{1}^{+}\frac{4}{3}s+\cdots\right),\\ \label{4:psi-B}
\psi_{-,B}&=\frac{\sqrt{3}}{2\sqrt{\pi}x}(s-1)^{-\frac{1}{2}}\left(1+\tilde{b}_{1}^{-}\frac{4}{3}(s-1)+\cdots\right).
\end{align}
Here we take the branches as $s^{1/2}>0$ if $s>0$ and $(s-1)^{1/2}>0$ if $s>1$. 

On the other hand, it follows from the definition of the Borel transform 
that $\psi_{\pm,B}$ satisfies the formal Borel transform of \eqref{2:Airy-system}:
\begin{equation}\label{4:psipmB}
\left\{
\begin{split}
\ & \left(-\frac{\partial^{2}}{\partial x^{2}}+\frac{\partial^{2}}{\partial y^{2}}x\right)\psi_{\pm, B}=0\\
 & \left(2x\frac{\partial}{\partial x}-3\frac{\partial}{\partial y}(-y)-1\right)\psi_{\pm,B}=0,
\end{split}\right.
\end{equation}
or, equivalently,
\begin{equation}\label{4:psipmB-bis}
\left\{\ 
\begin{split}
\ \ &\left(-\frac{\partial^{2}}{\partial x^{2}}+x\frac{\partial^{2}}{\partial y^{2}}\right)\psi_{\pm,B}=0,\\
\ \ &\left(2x\frac{\partial}{\partial x}+3y\frac{\partial}{\partial y}+2\right)\psi_{\pm,B}=0.
\end{split}\right.
\end{equation}
This system coincides with \eqref{3:sys-for-g}. Hence Theorem \ref{3:thm-for-g} yields 
\begin{prp}\label{4:wkb-g}
The Borel transforms $\psi_{\pm,B}$ of the WKB solutions $\psi_{\pm}$ can be
written as linear combinations of any two of $g_{j}$'s. 
Here $g_{j}$ $(j=1,2,3)$ are defined in Theorem \ref{3:thm-for-g}. 
Especially, $\psi_{\pm,B}$ are algebraic functions.
\end{prp}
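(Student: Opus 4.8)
The plan is to reduce the statement entirely to Theorem~\ref{3:thm-for-g}. Since $\psi_{\pm,B}$ satisfy \eqref{4:psipmB-bis}, which is the holonomic system \eqref{3:sys-for-g}, and since, by Theorem~\ref{3:thm-for-g}, the analytic solution space of that system is two-dimensional with any two of the branches $g_{1},g_{2},g_{3}$ forming a basis, each of $\psi_{+,B}$ and $\psi_{-,B}$ must be a linear combination of any two of the $g_{j}$ — provided one first knows that $\psi_{\pm,B}$ are honest (multivalued) analytic solutions rather than merely formal ones. Granting that, the final assertion is immediate: every $g_{j}$ is a root of the cubic \eqref{3:cubic1}, whose coefficients lie in $\mathbb{C}[x,y]$, so any $\mathbb{C}$-linear combination of two $g_{j}$'s is algebraic over $\mathbb{C}(x,y)$. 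Thus the only genuinely substantive point is the passage from the formal series $\psi_{\pm,B}$ (defined coefficient by coefficient) to a convergent, analytic object.

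For that I would use the weighted homogeneity. As noted right after \eqref{3:sysB2}, the second equation of \eqref{3:sys-for-g} forces every solution to be of the form $x^{-1}h(\sigma)$ with $\sigma=y\,x^{-3/2}$, and substituting into the first equation yields a second-order linear ordinary differential equation $Lh=0$ in the single variable $\sigma$ (the equation alluded to in the text just before Theorem~\ref{3:thm-for-g}). One checks that $L$ is Fuchsian, its only singular points being $\sigma=\pm\tfrac{2}{3}$ and $\sigma=\infty$; the two finite ones are exactly the zeros of $9y^{2}-4x^{3}$, where two of the branches $g_{j}$ have a square-root branch point, so the indicial exponents there are $0$ and $-\tfrac{1}{2}$. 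Rewriting the formal series of $\psi_{+,B}$ through $s=\tfrac{3}{4}\sigma+\tfrac{1}{2}$ as in \eqref{4:psi+B}, it is precisely a Frobenius series at $\sigma=-\tfrac{2}{3}$ attached to the exponent $-\tfrac{1}{2}$; likewise $\psi_{-,B}$ corresponds via \eqref{4:psi-B} to a Frobenius series at $\sigma=+\tfrac{2}{3}$. By the classical theory of regular singular points, such a formal Frobenius solution automatically has positive radius of convergence, so $\psi_{\pm,B}$ are genuine analytic functions in punctured neighbourhoods of $s=0$ and $s=1$, and after continuation to a generic point $(x_{0},y_{0})$ with $4x_{0}^{3}\neq 9y_{0}^{2}$ they lie in the analytic solution space of \eqref{3:sys-for-g}.

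The remainder is linear algebra in that solution space: Theorem~\ref{3:thm-for-g} gives a two-dimensional space spanned by any two of $g_{1},g_{2},g_{3}$ (consistently with $g_{1}+g_{2}+g_{3}=0$, the Vieta relation for \eqref{3:cubic1}, which has no $g^{2}$ term), and expanding $\psi_{+,B}$ and $\psi_{-,B}$ in any such basis produces the asserted representations; algebraicity follows as above.

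I expect the convergence / Fuchsian analysis of the second paragraph to be the only real obstacle; everything else is bookkeeping. One could instead try to work through the Laplace-type integrals \eqref{3:int-1}, identifying the $\varphi_{j}$ with Borel sums and invoking injectivity of the Borel transform, but that tends to presuppose the linear relations among the $\varphi_{j}$ — essentially the Voros formula itself — so the differential-equation argument above is cleaner. Note finally that this proposition only asserts existence of such linear combinations; determining the actual connection coefficients is the separate task of matching the explicit singular expansions \eqref{4:psi+B}--\eqref{4:psi-B} against the local expansions of the $g_{j}$ at $s=0$ and $s=1$.
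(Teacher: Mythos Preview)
Your proposal is correct and follows the same route as the paper: both observe that $\psi_{\pm,B}$ satisfy the system \eqref{4:psipmB-bis}, which coincides with \eqref{3:sys-for-g}, and then invoke Theorem~\ref{3:thm-for-g}. The paper's own proof is literally that one sentence; your second paragraph supplies a Fuchsian/Frobenius justification for the convergence of the formal Borel series that the paper leaves implicit, so your argument is if anything more complete than the original.
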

\begin{rem}
The explicit forms of $\psi_{\pm,B}$ are given by using hypergeometric
functions in \cite{akt0, KT}:
\begin{equation}\label{4:psiBHG}
\left\{
\begin{split} \quad\psi_{+,B}&=\frac{\sqrt{3}}{2\sqrt{\pi}\,x}s^{-\frac{1}{2}}
{}_{2}F_{1}\left(\frac{1}{6},\frac{5}{6},\frac{1}{2};s\right), \\
\quad\psi_{-,B}&=\frac{\sqrt{3}\,i}{2\sqrt{\pi}\,x}(1-s)^{-\frac{1}{2}}
{}_{2}F_{1}\left(\frac{5}{6},\frac{1}{6},
\frac{1}{2};1-s\right).
\end{split}\right.
\end{equation}
The classical connection formulas for the hypergeometric functions were used in \cite{akt0, KT}  
for the derivation of the Voros connection formula for the Airy equation. 
In the present article, we do not utilize \eqref{4:psiBHG}.
\end{rem}
We set 
\[
g=\frac{X}{x\,s^{1/2}(1-s)^{1/2}}
\]
in \eqref{3:cubic1}. Then we have the following cubic equation for $X$:
\begin{equation}\label{cubicforX}
16 X^{3}-3 X-s^{1/2}(1-s)^{1/2}=0.
\end{equation}
If $s=0$, we have $X=0, \,\pm\!\sqrt{3}/4$ and we find three roots $X_{1}, X_{2}, X_{3}$
of \eqref{cubicforX}
near $s=0$ with expansions
\begin{equation}\label{4:Xjat0}
\left\{
\begin{split}
\ X_{1}&=\frac{\sqrt{3}}{4}+\frac{1}{6}s^{1/2}-\frac{1}{6\sqrt{3}}s+\cdots,\\
X_{2}&=-\frac{\sqrt{3}}{4}+\frac{1}{6}s^{1/2}+\frac{1}{6\sqrt{3}}s+\cdots,\\
X_{3}&=-\frac{1}{3}s^{1/2}-\frac{5}{162}s^{3/2}-\cdots.
\end{split}\right.
\end{equation}
We take the analytic continuation of $X_{j}$ to $s=1$. 
If $s=1$, we also have $X=0, \,\pm\!\sqrt{3}/4$. For $s=1/2$, we have a double root $X=-1/4$ 
and a simple root $X=1/2$. The expansion of the branches of $X$ which merge at $s=1/2$ are
\begin{equation}\label{4:Xat1/2}
X=-\frac{1}{4}\pm\frac{1}{2\sqrt{3}}\left(s-\frac{1}{2}\right)+
\frac{1}{18}\left(s-\frac{1}{2}\right)^{2}\pm\cdots.
\end{equation}
The graphs of $Y=16 X^{3}-3 X-s^{1/2}(1-s)^{1/2}$ for $s=0,\, 1/2,\, 1$ are shown in
Figures 4.1--4.3. Since the coefficients of $(s-1/2)$ of \eqref{4:Xat1/2} do not vanish, two roots 
$X_{2}$ and $X_{3}$ pass each other at $s=1/2$ and
the coefficients of 
the leading terms of the expansions of $(X_{1},X_{2},X_{3})$ at $s=1$ are
$(\sqrt{3}/4,0,-\sqrt{3}/4)$. 

\

\includegraphics[width=40mm]{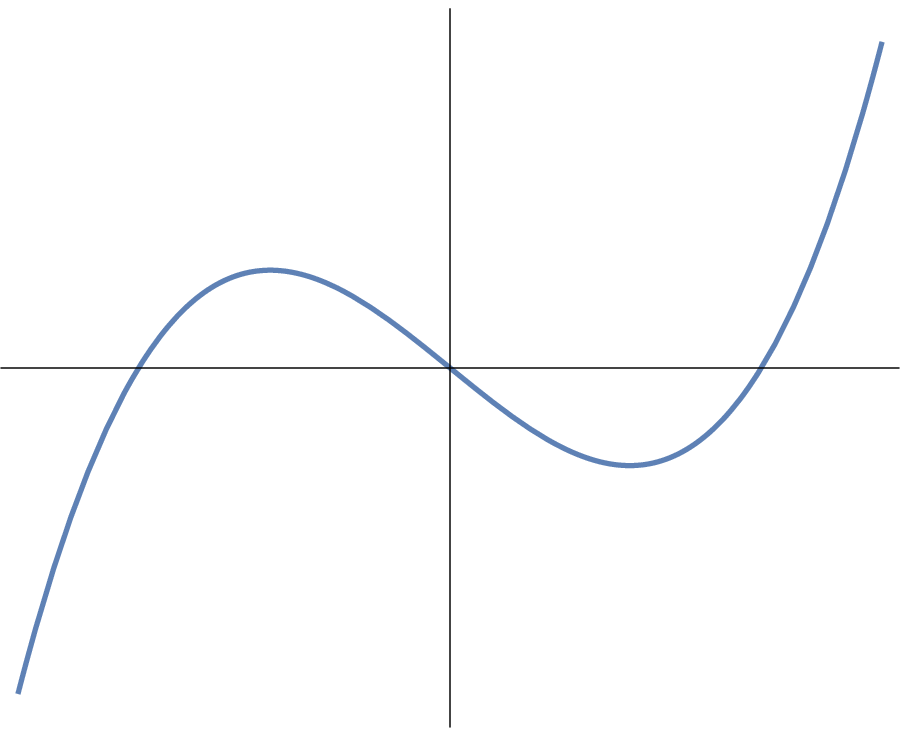}\quad
\includegraphics[width=40mm]{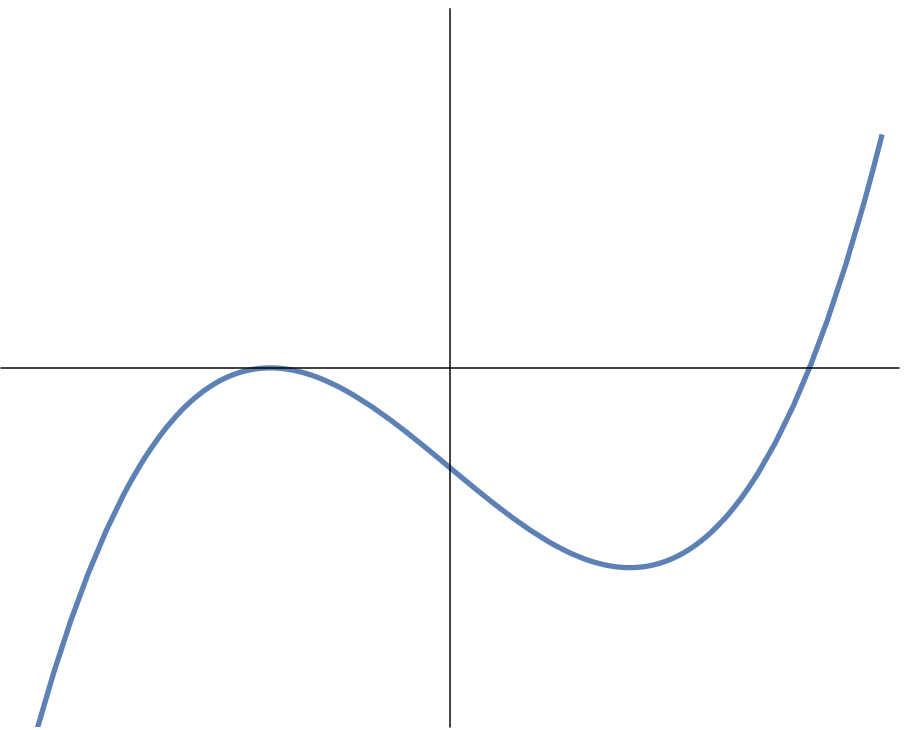}\quad
\includegraphics[width=40mm]{s01t.eps}

\vspace{-23mm}
{\scriptsize $\displaystyle -\frac{\sqrt{3}}{4}$}\hspace{24mm}{\scriptsize $\displaystyle \frac{\sqrt{3}}{4}$}
\hspace{13mm}{\small $X_{2}=X_{3}$ \hspace{14mm}{\scriptsize $\displaystyle \frac{1}{2}$}
\hspace{6mm}
{\scriptsize $\displaystyle -\frac{\sqrt{3}}{4}$}\hspace{24mm}{\scriptsize $\displaystyle \frac{\sqrt{3}}{4}$}

\vspace{1mm}
\hspace*{5mm}{\small$X_{2}$}\hspace{6.5mm}{\small$X_{3}$}\hspace{14mm}{\small$X_{1}$
\hspace{13mm}{\scriptsize$\displaystyle-\frac{1}{4}$}\hspace{22mm}{\small$X_{1}$}
\hspace{8.5mm}{\small$X_{3}$\hspace{6.5mm}{\small$X_{2}$\hspace{13mm}{\small$X_{1}$}

\vspace{-9mm}
\hspace*{20.5mm}$O$\hspace{41mm}$O$\hspace{41mm}$O$

\vspace{18mm}
\hspace*{9mm} Fig. 4.1. $s=0$\hspace{19mm} Fig. 4.2. $s=1/2$
\hspace{19mm} Fig. 4.3. $s=1$

\

\noindent Replacing $s$ by $1-s$ in the expansions of $X_{j}$'s at $s=0$ and comparing 
the leading coefficients, we have
\begin{equation}\label{4:Xjat1}
\left\{
\begin{split}
\ X_{1}&=\frac{\sqrt{3}}{4}+\frac{1}{6}(1-s)^{1/2}-\frac{1}{6\sqrt{3}}(1-s)+\cdots,\\
X_{2}&=-\frac{1}{3}(1-s)^{1/2}-\frac{5}{162}(1-s)^{3/2}-\cdots,\\
X_{3}&=-\frac{\sqrt{3}}{4}+\frac{1}{6}(1-s)^{1/2}+\frac{1}{6\sqrt{3}}(1-s)+\cdots.
\end{split}\right.
\end{equation}
We set
\[
g_{j}=\frac{X_{j}}{xs^{1/2}(1-s)^{1/2}} \quad(j=1,2,3).
\]
Then \eqref{4:Xjat0} yields the expansion of $g_{j}$ at $s=0$:
\begin{align*}
g_{1}&=\frac{1}{xs^{1/2}}\left(\frac{\sqrt{3}}{4}+\frac{1}{6}s^{1/2}+\frac{5}{24\sqrt{3}}s+\cdots\right),\\
g_{2}&=\frac{1}{xs^{1/2}}\left(-\frac{\sqrt{3}}{4}+\frac{1}{6}s^{1/2}-\frac{5}{24\sqrt{3}}s+\cdots\right), \quad g_{3}=-g_{1}-g_{2}.
\end{align*}
Using Proposition \ref{4:wkb-g} and comparing the leading terms of these expansions with  \eqref{4:psi+B}, we have
\begin{equation}\label{4:psi+B-g}
\psi_{+,B}=\frac{1}{\sqrt{\pi}}(g_{1}-g_{2}).
\end{equation}
Similarly, \eqref{4:Xjat1} yields the expansion of $g_{j}$ at $s=1$:
\begin{align*}
g_{1}&=\frac{1}{x(1-s)^{1/2}}\left(\frac{\sqrt{3}}{4}
+\frac{1}{6}(1-s)^{1/2}+\frac{5\sqrt{3}}{24}
(1-s)+\cdots\right),\\
g_{2}&=\frac{1}{x}\left(-\frac{1}{3}+\frac{16}{81}(1-s)+\cdots\right),\\
g_{3}&=\frac{1}{x(1-s)^{1/2}}\left(-\frac{\sqrt{3}}{4}+\frac{1}{6}(1-s)^{1/2}
-\frac{5\sqrt{3}}{24}(1-s)+\cdots\right).
\end{align*}
We compare the leading terms with  \eqref{4:psi-B}. Then we obtain
$\psi_{-,B}={i}(g_{1}-g_{3})/{\sqrt{\pi}}\, (={i}(2g_{1}+g_{2})/{\sqrt{\pi}})$.
Thus we have
\begin{thm}\label{4:psipmandgj}
The Borel transforms $\psi_{\pm,B}$ of the WKB solutions $\psi_{\pm}$ to the
Airy equation are expressed in terms of three branches $g_{j}$'s $($specified as above$)$
of the algebraic function
$g$ defined by \eqref{3:cubic1} 
as follows{\rm :}
\[
\psi_{+,B}=\frac{1}{\sqrt{\pi}}(g_{1}-g_{2}),\quad
\psi_{-,B}=\frac{i}{\sqrt{\pi}}(g_{1}-g_{3}).
\]
These relations can be also written in the form
\[
\psi_{+,B}=\frac{1}{\sqrt{\pi}}\Delta_{-\frac{2}{3}x^{3/2}}g_{2}(x,y),\quad
\psi_{-,B}=\frac{i}{\sqrt{\pi}}\Delta_{\frac{2}{3}x^{3/2}}g_{3}(x,y).
\]
Here $\Delta_{\alpha}g(x,y)$ designates the discontinuity of $g(x,y)$ at $y=\alpha$.
\end{thm}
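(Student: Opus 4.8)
The plan is to start from Proposition~\ref{4:wkb-g}, which already guarantees that each of $\psi_{+,B}$ and $\psi_{-,B}$ is a linear combination of the three branches $g_{1},g_{2},g_{3}$. Since the cubic \eqref{3:cubic1} has no $g^{2}$-term, $g_{1}+g_{2}+g_{3}=0$, so the two-dimensional solution space of \eqref{3:sys-for-g} is spanned by $g_{1}-g_{2}$ and $g_{3}$, and equally by $g_{1}-g_{3}$ and $g_{2}$. The entire problem thus reduces to pinning down the scalar coefficients in each of these combinations, and I would do this purely from the local behaviour of the functions near the two branch points $s=0$ and $s=1$ of \eqref{cubicforX}, i.e.\ near $y=\mp\tfrac{2}{3}x^{3/2}$.

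First I would record the local structure at $s=0$. By \eqref{4:Xjat0} the branches $g_{1}$ and $g_{2}$ each behave like $\pm\tfrac{\sqrt{3}}{4x}\,s^{-1/2}$ plus a power series in $s^{1/2}$, so the expansion of $g_{1}-g_{2}$ at $s=0$ contains only the powers $s^{-1/2},s^{1/2},s^{3/2},\dots$, whereas $g_{1}+g_{2}=-g_{3}$ has an expansion in nonnegative integer powers of $s$ and is holomorphic at $s=0$. Consequently the solutions of \eqref{3:sys-for-g} whose $s=0$-expansion has no holomorphic part form the one-dimensional subspace spanned by $g_{1}-g_{2}$. On the other hand \eqref{4:psi+B}, obtained from the term-by-term Borel transform \eqref{4:psipmB-form}, shows that $\psi_{+,B}$ is precisely $s^{-1/2}$ times a power series in $s$, so it lies in that subspace; comparing the coefficient of $s^{-1/2}$ in \eqref{4:psi+B} with the corresponding coefficient of $g_{1}-g_{2}$ fixes the constant and gives $\psi_{+,B}=\tfrac{1}{\sqrt{\pi}}(g_{1}-g_{2})$. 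Replacing $s$ by $1-s$ turns \eqref{4:Xjat0} into \eqref{4:Xjat1}; now $g_{1},g_{3}$ are the branches that blow up at $s=1$ while $g_{2}$ is holomorphic there, so the same reasoning applied to \eqref{4:psi-B} yields $\psi_{-,B}=\tfrac{i}{\sqrt{\pi}}(g_{1}-g_{3})$, the factor $i$ arising from the branch relation between $(s-1)^{1/2}$ and $(1-s)^{1/2}$ imposed by the conventions fixed after \eqref{4:psi-B}.

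To obtain the second formulation I would note that $9y^{2}-4x^{3}=16x^{3}s(s-1)$ is exactly the leading coefficient of the cubic \eqref{3:cubic1}, so the points $y=\mp\tfrac{2}{3}x^{3/2}$ (that is, $s=0,1$) are precisely where two of the three roots of \eqref{3:cubic1} run off to infinity and the local monodromy of $g$ is a transposition of those two branches. From the explicit expansions one checks that continuing $g_{2}$ once around $y=-\tfrac{2}{3}x^{3/2}$ produces $g_{1}$, so $\Delta_{-\frac{2}{3}x^{3/2}}g_{2}=g_{1}-g_{2}$, and similarly continuing $g_{3}$ around $y=\tfrac{2}{3}x^{3/2}$ produces $g_{1}$, so $\Delta_{\frac{2}{3}x^{3/2}}g_{3}=g_{1}-g_{3}$. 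Inserting these identities into the formulas just proved gives the stated $\Delta$-expressions.

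All of this is essentially bookkeeping; the only genuine care is in the branch- and sign-tracking. One must keep the square-root conventions of \eqref{4:psi+B}--\eqref{4:psi-B} consistent with those used to write \eqref{4:Xjat0}--\eqref{4:Xjat1}, correctly produce the factor $i$ in the formula for $\psi_{-,B}$, and match the orientation implicit in the definition of $\Delta_{\alpha}$ with the direction of the loop realising the monodromy. One should also check, using \eqref{4:psipmB-form}, that the higher coefficients $b_{j}^{\pm}$ do not spoil the ``purely singular'' form at the relevant point — they are coefficients of a power series in $\frac{y}{x^{3/2}}\pm\frac{2}{3}$, hence of a power series in $s$ (resp.\ $s-1$) — which is what justifies matching only the leading coefficient.
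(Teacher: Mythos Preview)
Your proposal is correct and follows essentially the same route as the paper: both start from Proposition~\ref{4:wkb-g}, pass to the auxiliary variable $X$ via \eqref{cubicforX}, and determine the two linear combinations by matching the local expansions \eqref{4:Xjat0}--\eqref{4:Xjat1} of the branches against the Borel-transform expansions \eqref{4:psi+B}--\eqref{4:psi-B}. Your structural remark that $g_{1}-g_{2}$ (resp.\ $g_{1}-g_{3}$) spans the subspace of solutions with purely half-integer expansion at $s=0$ (resp.\ $s=1$) is exactly what makes the paper's ``comparing the leading terms'' step legitimate, and your monodromy argument for the $\Delta$-formulation supplies the justification the paper leaves implicit; the one point the paper handles more carefully than your sketch is the analytic continuation through $s=1/2$ that fixes which branch at $s=1$ carries the label $X_{j}$.
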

Note that if $x\neq 0$, then $\pm 2/3 x^{3/2}$ are singularities of square root-type of $\psi_{+, B}$ and of $\psi_{-, B}$
in $y$-variable.

\section{The Voros connection formula for the Airy equation}
The Stokes curve (cf. \cite{KT}) of \eqref{2:Airy-1} is defined by
\[
{\rm Im}\int_{0}^{x}\sqrt{x}\,dx=0.
\]
It consists of three half-lines
\[
\arg x=0, \pm\frac{2}{3}\pi.
\]
Let ${\mathcal R}_{\rm I}$ and ${\mathcal R}_{\rm II}$ denote the sectors (two of the Stokes regions)
\[
\{\,x\in{\mathbb C}\,|\, -2/3\pi<\arg x<0\,\} \quad \mbox{and}\quad
\{\,x\in{\mathbb C}\,|\, 0<\arg x<2/3\pi\,\},
\]
respectively. Let $\ell_{\pm}(x)$ be half-lines
\[
\left\{\left. \, \mp \frac{2}{3}x^{3/2}+t\,\right|\, t\geq 0\,\right\}
\]
with the positive orientation in the $y$-plane and we set
\begin{equation}\label{5:PsiJ}
\Psi_{\pm}^{J}=\int_{\ell_{\pm}(x)}\psi_{\pm,B}(x,y)\exp(-y\eta)dy \quad\mbox{for}\ \ x\in{\mathcal R}_{J}\quad (J={\rm I, II}).
\end{equation}
Figures 5.1 and 5.2 show ${\mathcal R}_{J}$ and $\ell_{\pm}(x)$ for $x\in{\mathcal R}_{J}$ ($J={\rm I, II}$), respectively.
The wavy lines designate the branch cuts for $\psi_{+,B}$ in $y$-plane. 
Theorem \ref{4:psipmandgj} implies that these functions are well-defined and each $\Psi_{\pm}^{J}$ gives the Borel sum of 
$\psi_{\pm}$ in ${\mathcal R}_{J}$
($J={\rm I, II}$), respectively.

\vspace{5mm}

\includegraphics[width=33mm]{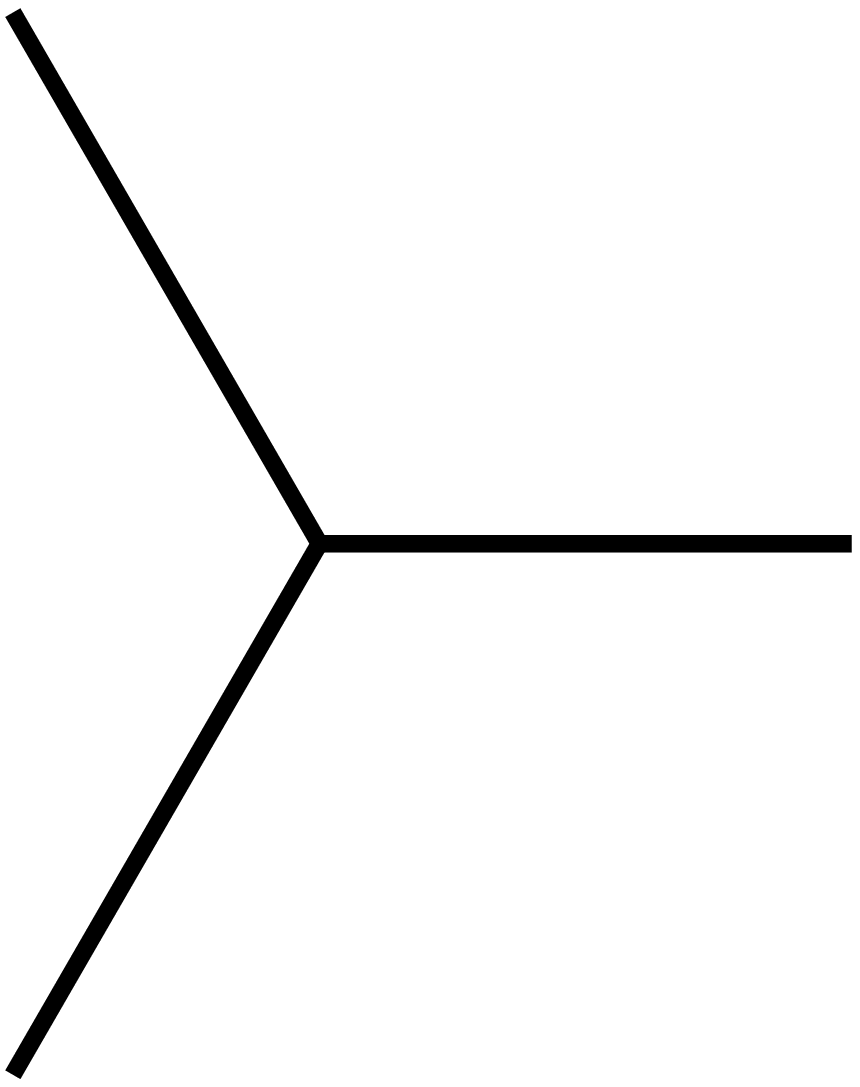}\hspace{8mm}
\includegraphics[width=33mm]{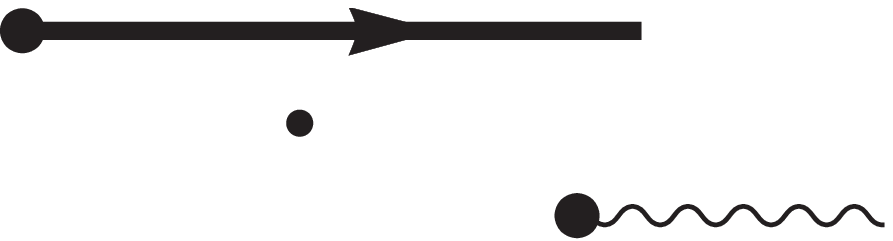}\hspace{8mm}

\vspace{-40mm}
\hspace*{41mm}
\includegraphics[width=33mm]{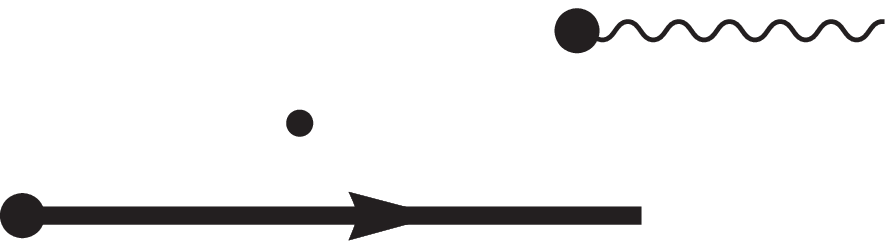}\hspace{8mm}

\vspace{-14mm}\hspace*{60mm}{\small $\frac{2}{3}x^{3/2}$}
\hspace{33mm}{\small $\frac{2}{3}x^{3/2}$}

\vspace{-1mm}\hspace*{117mm}{\small $\ell_{-}(x)$}

\vspace{-1mm}
\hspace{37mm}{\small $-\frac{2}{3}x^{3/2}$}\hspace{32mm}
{\small $-\frac{2}{3}x^{3/2}$}

\hspace*{20mm}{${\mathcal R}_{\rm II}$}\hspace{42mm}{\small $\ell_{+}(x)$}

\vspace{5mm}
\hspace*{43mm} Fig. 5.2.1. $x\in{\mathcal R}_{\rm II}$\hspace{17mm} Fig. 5.2.2. $x\in{\mathcal R}_{\rm II}$

\vspace{-1.5mm}
\hspace*{8mm}$O$

\vspace{8mm}
\hspace*{20mm}{${\mathcal R}_{\rm I}$}\hspace{43mm}{\small $\ell_{+}(x)$}

\vspace{-7mm}\hspace*{37mm}{\small $-\frac{2}{3}x^{3/2}$}\hspace{32mm}
{\small $-\frac{2}{3}x^{3/2}$}

\vspace{9mm}
\hspace*{60mm}{\small $\frac{2}{3}x^{3/2}$}
\hspace{33mm}{\small $\frac{2}{3}x^{3/2}$}

\vspace{-7mm}\hspace*{117mm}{\small $\ell_{-}(x)$}

\vspace{5mm}
 Fig. 5.1. Stokes regions \hspace{9mm} Fig. 5.2.3. $x\in{\mathcal R}_{\rm I}$
\hspace{17mm} Fig. 5.2.4. $x\in{\mathcal R}_{\rm I}$

\vspace{-49mm}\hspace*{65mm}
\includegraphics[width=50mm]{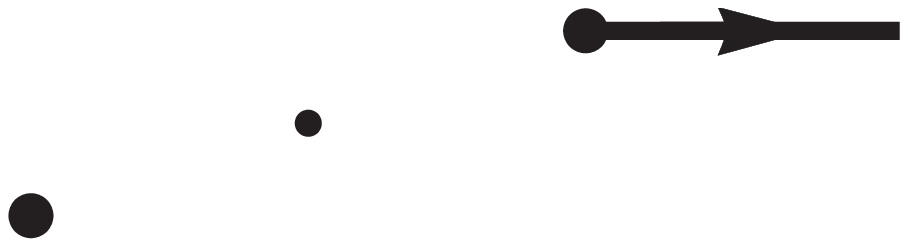}

\vspace{-19mm}
\hspace*{65mm}
\includegraphics[width=50mm]{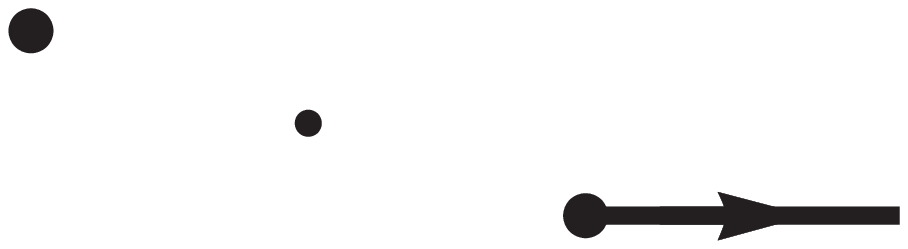}

\vspace{-27mm}
\noindent We take the analytic continuation of $\Psi_{\pm}^{\rm I}$ to ${\mathcal R}_{\rm II}$.
If a point $x\in{\mathcal R}_{\rm I}$ with ${\rm Re}\, x>0$ moves up to ${\mathcal R}_{\rm II}$
across the positive real axis, 
the singular point $y=2/3 x^{3/2}$ of $\psi_{+,B}$ crosses $\ell_{+}(x)$ (see Figures 5.2.3 and 5.2.1).
Hence we have to take a path $\tilde{\ell}_{+}(x)$ of integration shown in Figure 5.3 instead of $\ell_{+}(x)$ for
$x\in{\mathcal R}_{\rm II}$ for the analytic continuation.

\vspace{2mm}
\hspace*{8mm}
\includegraphics[width=45mm]{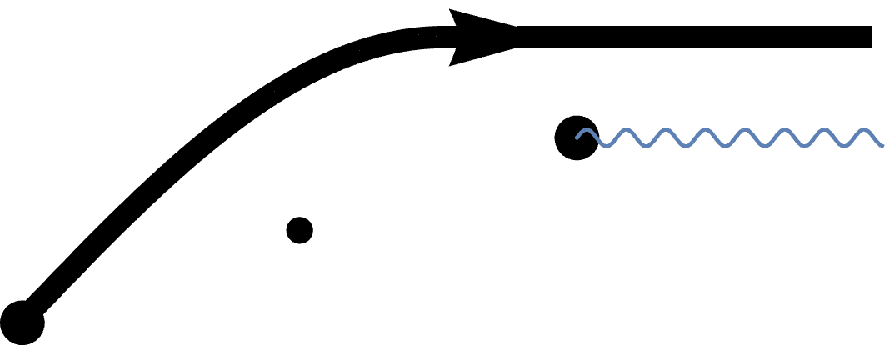}

\vspace{-19mm}
\hspace*{65mm}
\includegraphics[width=45mm]{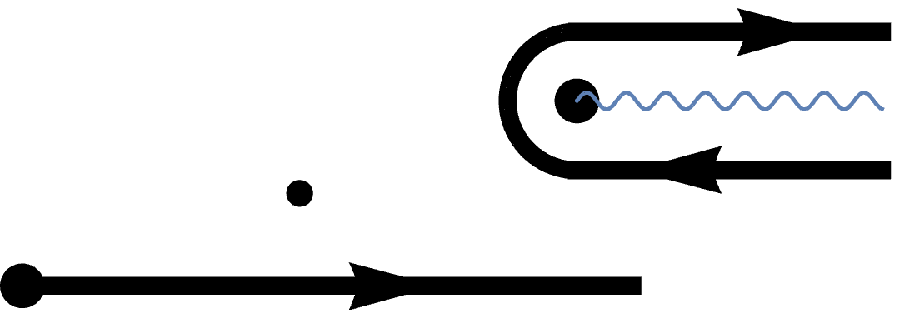}

\vspace{-15mm}
\hspace*{8mm} {\small $\tilde{\ell}_{+}(x)$}\hspace{12mm}{\small$\frac{2}{3}x^{3/2}$}
\hspace{50mm}{$\Gamma$}

\vspace{7mm}
\!\!\!{\small $-\frac{2}{3}x^{3/2}$}\hspace{91mm}{\small $\ell_{+}(x)$}

\vspace{2mm}
\hspace{26mm} Fig. 5.3.\hspace{43mm} Fig. 5.4.

\

\noindent Thus the analytic continuation of $\Psi_{+}^{\rm I}$ has an expression
\[
\Psi_{+}^{\rm I}=\int_{\tilde{\ell}_{+}(x)}\psi_{+,B}(x,y)\exp(-y\eta)dy
\]
for $x\in{\mathcal R}_{\rm II}$.
If we take a path $\Gamma$ surrounding the branch cut (or $\ell_{-}(x)$) clockwise (as shown in Figure 5.4), we have
\[
\tilde{\ell}_{+}(x)\sim\ell_{+}(x)+\Gamma
\]
as the path of integration for $x\in{\mathcal R}_{\rm II}$. Hence we have
\begin{equation}\label{5:psiconti}
\Psi_{+}^{\rm I}=\int_{\ell_{+}(x)}\psi_{+,B}(x,y)\exp(-y\eta)dy
+\int_{\Gamma}\psi_{+,B}(x,y)\exp(-y\eta)dy.
\end{equation}
The first term equals $\Psi_{+}^{\rm II}$. It follows from Theorem \ref{4:psipmandgj} that the second term can be written 
in terms of $g_{j}$'s:
\[
\int_{\Gamma}
\frac{1}{\sqrt{\pi}}(g_{1}-g_{2})\exp(-y\eta)
dy.
\]
Since $g_{2}$ is holomorphic on a neighborhood of $\ell_{-}(x)$ in the $y$-variable, it does not contribute to the value of the integration.
Therefore we have
\begin{equation}\label{5:psiconti-2}
\Psi_{+}^{\rm I}=\Psi_{+}^{\rm II}+\frac{1}{\sqrt{\pi}}\int_{\Gamma}g_{1}\exp(-y\eta)dy.
\end{equation}
The second term of the right-hand side equals
\[
-\frac{1}{\sqrt{\pi}}\int_{\ell_{-}(x)}(\Delta_{\frac{2}{3}x^{3/2}}g_{3})\exp(-y \eta)dy.
\]
Hence Theorem \ref{4:psipmandgj} yields
\[
\Psi_{+}^{\rm I}=\Psi_{+}^{\rm II}-\frac{1}{i}\int_{\ell_{-}(x)}\psi_{-,B}(x,y)\exp(-y\eta)dy,
\]
namely,
\[
\Psi_{+}^{\rm I}=\Psi_{+}^{\rm II}+i\Psi_{-}^{\rm II}.
\]
On the other hand, the singularity $y=-2/3 x^{3/2}$ does not meet $\ell_{-}(x)$ when $x$ moves from ${\mathcal R}_{\rm I}$
to ${\mathcal R}_{\rm II}$ across the positive real axis. This implies
 $\Psi_{-}^{\rm I}=\Psi_{-}^{\rm II}$.
 Hence we have obtained the Voros connection formula for the WKB solutions to the Airy equation
 without using any knowledge concerning the hypergoemetric functions:
 \begin{equation*}
\left\{
\begin{split}
\ \Psi_{+}^{\rm I}&=\Psi_{+}^{\rm II}+i\,\Psi_{-}^{\rm II},\\
\Psi_{-}^{\rm I}&=\Psi_{-}^{\rm II}.
\end{split}\right.
\end{equation*}


\section{Relation between WKB solutions and Airy functions}\label{6:section}
In this section, we relate the Airy functions and the WKB solutions. 
We employ the same notation as in the previous sections. 
\begin{thm}\label{6:AiBi-Psipm}
The Airy functions ${\rm Ai}$ and ${\rm Bi}$ are expressed in terms of the Borel sums $\Psi_{\pm}^{\rm I}$ of WKB solutions 
$\psi_{\pm}$ to the Airy equation in ${\mathcal R}_{\rm I}$ as follows{\rm :}
\begin{equation}\label{6:AiBi-I}
\left\{\ 
\begin{split}
{\rm Ai}(\eta^{\frac{2}{3}}x)&=\eta^{\frac{1}{3}}\frac{1}{2\sqrt{\pi}}\Psi_{-}^{\rm I},\\
{\rm Bi}(\eta^{\frac{2}{3}}x)&=\eta^{\frac{1}{3}}\frac{1}{\sqrt{\pi}}\Psi_{+}^{\rm I}-
\eta^{\frac{1}{3}}\frac{i}{2\sqrt{\pi}}\Psi_{-}^{\rm I}.
\end{split}\right.\end{equation}
\end{thm}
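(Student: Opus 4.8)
By \eqref{3:AiBi-varphi} the theorem reduces to identifying two of the integrals $\varphi_{j}$ in terms of the $\Psi_{\pm}^{\rm I}$; concretely I would prove
\[
\varphi_{1}=i\sqrt{\pi}\,\Psi_{-}^{\rm I},\qquad
\varphi_{2}=-\sqrt{\pi}\,\Psi_{+}^{\rm I}\qquad(x\in\mathcal{R}_{\rm I}).
\]
Then $\varphi_{3}=-\varphi_{1}-\varphi_{2}$ gives $\varphi_{3}-\varphi_{2}=2\sqrt{\pi}\,\Psi_{+}^{\rm I}-i\sqrt{\pi}\,\Psi_{-}^{\rm I}$, and substituting both into \eqref{3:AiBi-varphi} produces \eqref{6:AiBi-I} at once. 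To prove the two displayed identities I would return to the Laplace-type representations: by \eqref{3:int-1}, $\varphi_{j}=\int_{c_{j}}g(x,y)e^{-y\eta}\,dy$, where $c_{j}$ is the image of the Airy path $\gamma_{j}$ under the cubic substitution $y=xt-t^{3}/3$ of \eqref{3:eqfort}; and by the definition \eqref{5:PsiJ} together with Theorem \ref{4:psipmandgj},
\[
\int_{\ell_{+}(x)}\bigl(\Delta_{-\frac{2}{3}x^{3/2}}g_{2}\bigr)e^{-y\eta}\,dy=\sqrt{\pi}\,\Psi_{+}^{\rm I},\qquad
\int_{\ell_{-}(x)}\bigl(\Delta_{\frac{2}{3}x^{3/2}}g_{3}\bigr)e^{-y\eta}\,dy=-i\sqrt{\pi}\,\Psi_{-}^{\rm I}.
\]
So the task becomes to deform, in the Borel $(y)$-plane, the contour $c_{1}$ (resp. $c_{2}$) onto $\pm$ the Hankel loop around $\ell_{-}(x)$ (resp. $\ell_{+}(x)$) that realizes the discontinuity $\Delta_{\frac{2}{3}x^{3/2}}g_{3}$ (resp. $\Delta_{-\frac{2}{3}x^{3/2}}g_{2}$), after which the identities above together with a determination of the sign finish the proof. (That only two such identifications are needed is consistent with the fact that each $\varphi_{j}$ and each $\Psi_{\pm}^{\rm I}$ satisfies the same rank-$2$ holonomic system \eqref{2:Airy-system}.)

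\textbf{Steps.}
First I would examine the covering $t\mapsto y=xt-t^{3}/3$: its critical points $t=\pm\sqrt{x}$ have the critical values $y=\pm\frac{2}{3}x^{3/2}$, so it is a three-sheeted cover of the $y$-plane branched exactly over $y=\pm\frac{2}{3}x^{3/2}$, the two square-root singularities of $\psi_{\pm,B}$ noted after Theorem \ref{4:psipmandgj}. Through $g=1/(x-t^{2})$ the three local inverses $t=t(x,y)$ give the three branches of \eqref{3:cubic1}, and I would pin down which of them is $g_{1},g_{2},g_{3}$ by matching their behaviour near $y=\pm\frac{2}{3}x^{3/2}$ with the normalization of Section 4 (the expansions at $s=0$ and $s=1$). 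Next, with $x\in\mathcal{R}_{\rm I}$ fixed, I would read off the steepest-descent shape of $\int_{\gamma_{j}}\exp(\eta(t^{3}/3-xt))\,dt$: the Airy-$\mathrm{Ai}$ path $\gamma_{1}$ can be pushed through the single saddle $t=\sqrt{x}$, so its image $c_{1}$ runs out to $y=+\infty$ in both directions while picking up only the branch point $y=\frac{2}{3}x^{3/2}$, and since $g\,e^{-y\eta}$ decays as $\mathrm{Re}\,y\to+\infty$ on the two sheets involved, $c_{1}$ is homologous there to $\pm$ the Hankel loop around $\ell_{-}(x)$ on which the integrand equals $\Delta_{\frac{2}{3}x^{3/2}}g_{3}$; combined with the identity above this gives $\varphi_{1}=\mp i\sqrt{\pi}\,\Psi_{-}^{\rm I}$, and the sign (by an orientation check, or by matching the leading-order asymptotics of $\varphi_{1}$ against \eqref{4:psi-B}) comes out $\varphi_{1}=i\sqrt{\pi}\,\Psi_{-}^{\rm I}$. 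In exactly the same way, for $x\in\mathcal{R}_{\rm I}$ the path $\gamma_{2}$ can be deformed so that $c_{2}$ realizes only the discontinuity $\Delta_{-\frac{2}{3}x^{3/2}}g_{2}$, on the sheets of $g_{1}$ and $g_{2}$, whence $\varphi_{2}=\pm\sqrt{\pi}\,\Psi_{+}^{\rm I}$, and the analogous comparison with \eqref{4:psi+B} fixes the sign to $\varphi_{2}=-\sqrt{\pi}\,\Psi_{+}^{\rm I}$.

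\textbf{Main obstacle.}
The whole difficulty lies in the sheet-and-orientation bookkeeping of the steps above. For $x\in\mathcal{R}_{\rm I}$ one must verify that the image of $\gamma_{1}$ winds exactly once, and in the correct orientation, around $y=\frac{2}{3}x^{3/2}$ alone (never picking up the singularity at $y=-\frac{2}{3}x^{3/2}$), and that it lies precisely on the two sheets carrying $g_{1}$ and $g_{3}$ in the Section 4 labelling, and likewise that $c_{2}$ picks up only $y=-\frac{2}{3}x^{3/2}$ on the sheets of $g_{1}$ and $g_{2}$; a sign or branch error here would corrupt the constants $i\sqrt{\pi}$ and $-\sqrt{\pi}$. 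Once the branched covering is correctly oriented and labelled, the remainder --- deforming the $c_{j}$ onto the Hankel loops and fixing the two signs by an elementary saddle-point estimate --- is routine, and, as intended, it invokes no property of the hypergeometric function: only Theorem \ref{4:psipmandgj}, the definition \eqref{5:PsiJ}, and the relation $\varphi_{1}+\varphi_{2}+\varphi_{3}=0$.
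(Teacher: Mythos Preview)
Your proposal is correct and follows essentially the same route as the paper. The paper carries out exactly the ``sheet-and-orientation bookkeeping'' you flag as the main obstacle: it chooses $\gamma_{1}$ and $\gamma_{2}$ to be the steepest-descent paths through the saddles $\sqrt{x}$ and $-\sqrt{x}$, splits each at its saddle, and then identifies the branch of $t(x,y)$ (hence of $g=1/(x-t^{2})$) on each half by writing out the first terms of its Puiseux expansion in $s$ or $1-s$ and comparing with \eqref{4:Xjat0}, \eqref{4:Xjat1}; this pins down $(g_{1},g_{3})$ on the two halves of $c_{1}$ and $(g_{1},g_{2})$ on the two halves of $c_{2}$, giving $\varphi_{1}=-\int_{\ell_{-}(x)}(g_{1}-g_{3})e^{-y\eta}\,dy$ and $\varphi_{2}=-\int_{\ell_{+}(x)}(g_{1}-g_{2})e^{-y\eta}\,dy$ directly, with no separate sign determination needed.
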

\begin{proof}
We choose the path $\gamma_{j}$ of integration \eqref{3:varphij} more carefully. If $x\in {\mathcal R}_{\rm I}$, we take
$\gamma_{1}$ (resp. $\gamma_{2}$) as the steepest descent path of the phase function of the integral
\eqref{3:varphij} passing through the saddle point $\sqrt{x}$ (resp. $-\sqrt{x}$). 
Here we take the branch as $|\arg\sqrt{x\,}\,|<\pi/3$ for $x\in {\mathcal R}_{\rm I}\cup{\mathcal R}_{\rm II}$.
As a set, $\gamma_{1}$ (resp. $\gamma_{2}$) is included in
\[
\left\{\,t\in{\mathbb C}\,\left|\, {\rm Im}\left(\frac{t^{3}}{3}+x t+\frac{2}{3}x^{3/2}\right)=0, 
\, {\rm Re}\left(\frac{t^{3}}{3}-x t+\frac{2}{3}x^{3/2}\right)\le 0\,\right.\right\}
\]
\[
\left( {\rm resp.} \left\{\,t\in{\mathbb C}\,\left|\, {\rm Im}\left(\frac{t^{3}}{3}-x t-\frac{2}{3}x^{3/2}\right)=0, 
\, {\rm Re}\left(\frac{t^{3}}{3}-x t-\frac{2}{3}x^{3/2}\right)\le 0\,\right.\right\}\right).
\]
We go back to \eqref{3:int-1} and specify the branch of $g$ more precisely.
Let us divide $\gamma_{1}$ into two parts by cutting it at the saddle point $\sqrt{x}$. We denote by
$\gamma_{1}^{-}$ (resp. $\gamma_{1}^{+}$) the lower (resp. upper) part.
Let $c_{1}^{\pm}$ be the image of $\gamma_{1}^{\pm}$ by the mapping $t\,\mapsto\,y$.
Then $c_{1}^{-}$ (resp. $c_{1}^{+}$) coincides with $-\ell_{-}(x)$ (resp. $\ell_{-}(x)$).
If $y\in c_{1}^{-}$ (resp. $y\in c_{1}^{+}$) is close to $2/3x^{3/2}$, the branch of the root $t$ 
of \eqref{3:eqfort} should be taken as
\[
t=x^{1/2}\left(1+\frac{2\sqrt{3}}{3}(1-s)^{1/2}+\cdots\right)\quad \left(\mbox{resp.} \ \ 
t=x^{1/2}\left(1-\frac{2\sqrt{3}}{3}(1-s)^{1/2}+\cdots\right)\right).
\]
Here we choose the branch as $(1-s)^{1/2}=(s-1)^{1/2}e^{-\pi i/2}$ for $s>1$. 
Hence the branch of $\displaystyle g=\frac{1}{x-t^{2}}$ 
should be taken as $g_{1}$ on $c_{1}^{-}$ (resp. $g_{3}$ on $c_{1}^{+}$). 
By the definition of $g_{j}$, \eqref{3:int-1} for $j=1$ should be understood as
\begin{align*}
\varphi_{1}&=\int_{c_{1}^{-}}g_{1}(x,y)\exp(-y\eta)dy+\int_{c_{1}^{+}}g_{3}(x,y)\exp(-y\eta)dy\\
&=\int_{-\ell_{-}(x)}g_{1}(x,y)\exp(-y\eta)dy+\int_{\ell_{+}(x)}g_{3}(x,y)\exp(-y\eta)dy\\
&=-\int_{\ell_{-}(x)}(g_{1}(x,y)-g_{3}(x,y))\exp(-y\eta)dy.
\end{align*}
Using Theorem \ref{4:psipmandgj}, we have
\[
\varphi_{1}=-\frac{\sqrt{\pi}}{i}\int_{\ell_{-}(x)}\psi_{-,B}(x,y)\exp(-y\eta)dy.
\]
Thus the first equation of \eqref{3:AiBi-varphi} shows
\[
{\rm Ai}(\eta^{2/3}x)=-\frac{\eta^{\frac{1}{3}}}{2\pi i}\cdot\frac{\sqrt{\pi}}{i}\int_{\ell_{-}(x)}\psi_{-,B}(x,\eta)\exp(-y\eta)dy
=\eta^{\frac{1}{3}}\frac{1}{2\sqrt{\pi}}\Psi_{-}^{\rm I}.
\]
Next we divide $\gamma_{2}$ into two parts at the saddle point $-\sqrt{x}$. We denote by $\gamma_{2}^{-}$
(resp. $\gamma_{2}^{+}$) the right (resp. left) part. Let $c_{2}^{-}$ (resp. $c_{2}^{+}$) be the image of
$\gamma_{2}^{-}$ (resp. $\gamma_{2}^{+}$) by the mapping $t\,\mapsto\,y$. Then $c_{2}^{-}$ (resp. $c_{2}^{+}$)
coincides with $-\ell_{+}(x)$ (resp. $\ell_{+}(x)$).
If $y\in c_{2}^{-}$ (resp. $y\in c_{2}^{+}$) is close to $-2/3 x^{3/2}$, the branch of the root $t$ 
of \eqref{3:eqfort} should be taken as
\[
t=x^{1/2}\left(-1+\frac{2\sqrt{3}}{3}s^{1/2}+\cdots\right)\quad
\left(\mbox{resp.} \ \ t=x^{1/2}\left(-1-\frac{2\sqrt{3}}{3}s^{1/2}+\cdots\right)\right).
\]
Hence the branch of $\displaystyle g=\frac{1}{x-t^{2}}$ 
must be taken as $g_{1}$ on $c_{1}^{-}$ (resp. $g_{2}$ on $c_{1}^{+}$). 
Then we have
\begin{align*}
\varphi_{2}&=\int_{c_{2}^{-}}g_{1}(x,y)\exp(-y\eta)dy+\int_{c_{2}^{+}}g_{2}(x,y)\exp(-y\eta)dy\\
&=-\int_{\ell_{+}(x)}(g_{1}(x,y)-g_{2}(x,y))\exp(-y\eta)dy\\
&=-\sqrt{\pi}\int_{\ell_{+}(x)}\psi_{+,B}(x,y)\exp(-y\eta)dy.
\end{align*}
Hence the second equation in \eqref{3:AiBi-varphi} yields
\begin{align*}
{\rm Bi}(\eta^{2/3}x)&=\frac{\eta^{\frac{1}{3}}}{2\pi}(-\varphi_{1}(x,\eta)-2\varphi_{2}(x,\eta))\\
&=\frac{\eta^{\frac{1}{3}}}{2\pi}\frac{\sqrt{\pi}}{i}\int_{\ell_{-}(x)}\psi_{-,B}(x,y)\exp(-y\eta)dy\\
& \hspace{25mm} +\frac{\eta^{\frac{1}{3}}}{\pi}\int_{\ell_{+}(x)}\sqrt{\pi}
\psi_{+,B}(x,y)\exp(-y\eta)dy\\
&=\frac{\eta^{\frac{1}{3}}}{\sqrt{\pi}}\Psi_{+}^{\rm I}-\frac{\eta^{\frac{1}{3}}}{2\sqrt{\pi}}i\Psi_{-}^{\rm I}.
\end{align*}
This completes the proof.
\end{proof}
\begin{rem}
Using the Voros connection formula, we obtain
\begin{equation*}
\left\{\ 
\begin{split}
{\rm Ai}(\eta^{\frac{2}{3}}x)&=\eta^{\frac{1}{3}}\frac{1}{2\sqrt{\pi}}\Psi_{-}^{\rm II},\\
{\rm Bi}(\eta^{\frac{2}{3}}x)&=\eta^{\frac{1}{3}}\frac{1}{\sqrt{\pi}}\Psi_{+}^{\rm II}+
\eta^{\frac{1}{3}}\frac{i}{2\sqrt{\pi}}\Psi_{-}^{\rm II}.
\end{split}\right.
\end{equation*}
Conversely, $\Psi_{\pm}^{\rm J}$ are written in terms of ${\rm Ai}$ and ${\rm Bi}$ as follows:
\begin{equation*}
\left\{\ 
\begin{split}
\Psi_{+}^{\rm I}&=\eta^{-\frac{1}{3}}\sqrt{\pi}\, i\, {\rm Ai}(\eta^{\frac{2}{3}}x)
+\eta^{-\frac{1}{3}}\sqrt{\pi}\,{\rm Bi}(\eta^{\frac{2}{3}}x),\\
\Psi_{-}^{\rm I}&=\eta^{-\frac{1}{3}}2\sqrt{\pi}\,{\rm Ai}(\eta^{\frac{2}{3}}x),
\end{split}\right.
\end{equation*}
\begin{equation*}
\left\{\ 
\begin{split}
\Psi_{+}^{\rm II}&=-\eta^{-\frac{1}{3}}\sqrt{\pi}\, i\, {\rm Ai}(\eta^{\frac{2}{3}}x)
+\eta^{-\frac{1}{3}}\sqrt{\pi}\,{\rm Bi}(\eta^{\frac{2}{3}}x),\\
\Psi_{-}^{\rm II}&=\eta^{-\frac{1}{3}}2\sqrt{\pi}\,{\rm Ai}(\eta^{\frac{2}{3}}x).
\end{split}\right.
\end{equation*}
\end{rem}
Watson's lemma and
the expressions given above reproduce the classical
asymptotic formulas for the Airy functions (cf. \cite{O}):
\begin{equation*}
\left\{\ 
\begin{split}
{\rm Ai}(\eta^{\frac{2}{3}}x)&\sim\eta^{\frac{1}{3}}\frac{1}{2\sqrt{\pi}}\psi_{-},\\
{\rm Bi}(\eta^{\frac{2}{3}}x)&\sim\eta^{\frac{1}{3}}\frac{1}{\sqrt{\pi}}\psi_{+}-
\eta^{\frac{1}{3}}\frac{i}{2\sqrt{\pi}}\psi_{-}
\end{split}\right.\quad (x\in{\mathcal R}_{\rm I},\ \eta\rightarrow\infty),
\end{equation*}
\begin{equation*}
\left\{\ 
\begin{split}
{\rm Ai}(\eta^{\frac{2}{3}}x)&\sim\eta^{\frac{1}{3}}\frac{1}{2\sqrt{\pi}}\psi_{-},\\
{\rm Bi}(\eta^{\frac{2}{3}}x)&\sim\eta^{\frac{1}{3}}\frac{1}{\sqrt{\pi}}\psi_{+}+
\eta^{\frac{1}{3}}\frac{i}{2\sqrt{\pi}}\psi_{-}
\end{split}\right.\quad (x\in{\mathcal R}_{\rm II},\ \eta\rightarrow\infty).
\end{equation*}
Explicit forms of $\psi_{\pm}$ are written as follows:
\[
\psi_\pm=\frac{{e^{\pm\frac{2}{3}x^{3/2}\eta}}}{2\pi}\sum_{n=0}^\infty 
{\eta^{-n-\frac12}}
\left(\pm\frac{3}{4}\right)^n\frac{\Gamma\big(n+\frac{1}{6}\big)\Gamma\big(n+\frac{5}{6}\big)}{n!} x^{-\frac{3}{2}n-\frac{1}{4}}.
\]

\section{Some generalization}
We consider the following integral:
\begin{equation}\label{7:Pearcey-int}
v=\int\exp\left(\eta\left(t^{4}+x_{2} t^{2}+x_{1} t\right)\right)dt.
\end{equation}
Here $x_{1}, x_{2}$ are complex variables and the path of integration is taken suitably.
This is a natural extension of \eqref{3:varphij} to two-variable case. It is called the
Pearcey integral (\cite{O}) with the large parameter $\eta$. Most parts of the discussions developed in \S\S 2--6
can be generalized to the system of partial differential equations that characterizes this integral. 
We review some results concerning this system without proof. Details will be given in \cite{asu}.

We can easily see that $\psi=v$ satisfies 
the system of partial differential equations
\begin{equation}\label{7:Pearcey0}
\left\{\ \ 
\begin{split}
&\left(4\partial_{1}^{3}+2x_{2}\eta^{2}\partial_{1}+x_{1}\eta^{3}\right)\psi=0,\\
&\left(\eta\partial_{2}-\partial_{1}^{2}\right)\psi=0,
\end{split}
\right.
\end{equation}
where we set $\partial_{1}=\partial/\partial x_{1}$,
$\partial_{2}=\partial/\partial x_{2}$. 
WKB solutions to this system is constructed in \cite{a, h} and the connection problem of the solutions was
discussed in \cite{h}. We employ another system of partial differential equations. We set 
\begin{align}\label{7:pj}
{P}_1&=4\partial_1 \partial_2+2 \eta  x_2 \partial_1 +\eta ^2 x_1,\\
{P}_2&=4 \partial_2^2+\eta  x_1 \partial_1 +2\eta  x_2 \partial_2 +\eta,\\
{P}_3&=\eta\partial_{2}- \partial_{1}^{2}.
\end{align}
It is easy to see that $\psi=v$ is a solution to the system (cf. \cite{OK})
\begin{equation}\label{7:Pearcey-1}
\left\{
\begin{split}
P_{1}\psi&=0,\\
P_{2}\psi&=0,\\
P_{3}\psi&=0.
\end{split}
\right.
\end{equation}
In fact, we have
\[
P_{1}v=\eta\int\partial_{t}\exp\left(\eta\left(t^{4}+x_{2} t^{2}+x_{1} t\right)\right)dt=0,
\]
\[
P_{2}v=\eta\int\partial_{t}(\left(t  \exp\left(\eta\left(t^{4}+x_{2} t^{2}+x_{1} t\right)\right)\right)dt=0,
\]
\[
P_{3}v=\eta^{2}\int\left(t^{2}-t^{2}\right)\exp\left(\eta\left(t^{4}+x_{2} t^{2}+x_{1} t\right)\right)dt=0.
\]
Setting
\[
Q_{1}=4\partial_{1}^{3}+2x_{2}\eta^{2}\partial_{1}+x_{1}\eta^{3}\quad\mbox{and}\quad 
Q_{2}=\eta\partial_{2}-\partial_{1}^{2}(=P_{3}),
\]
we can confirm the following relations:
\begin{align}\label{7:p1-q1q2}
P_{1}&=\eta^{-1}(Q_{1}+4\partial_{1}Q_{2}),\\ \label{7:p2-q1q2}
P_{2}&=\eta^{-2}\partial_{1}Q_{1}+(4\eta^{-2}Q_{2}+8\eta^{-2}\partial_{1}^{2}+2x_{2})Q_{2},\\ \label{7:q1-p1p3}
Q_{1}&=\eta P_{1}-4\partial_{1}P_{3}.
\end{align}
Hence if $\eta\neq 0$ is fixed, \eqref{7:Pearcey-1} is equivalent to \eqref{7:Pearcey0}. We also note that
\[
P_{2}=\eta^{-1}\partial_{1}P_{1}+2(2\eta^{-1}\partial_{2}+x_{2})P_{3}
\]
holds. 
Next we consider $\eta$ as an independent complex variable. 
Then the systems \eqref{7:Pearcey0} and \eqref{7:Pearcey-1} are subholonomic. 
To find another independent differential equation for $v$, 
we look at the weighted homogeneity of  \eqref{7:Pearcey-int} 
in $(x_{1}, x_{2}, \eta)$. We can see that
\[
v(\lambda^{3}x_{1},\lambda^{2}x_{2},\lambda^{-4}\eta)=\lambda v(x_{1},x_{2},\eta)
\]
holds for $\lambda\neq 0$ and hence $\psi=v$ is a solution to
\begin{equation}\label{7:homog}
(3x_{1}\partial_{1}+2x_{2}\partial_{2}-4\eta\partial_{\eta}-1)\psi=0,
\end{equation}
where $\partial_{\eta}=\partial/\partial\eta$.
We set 
\[
P_{4}=3x_{1}\partial_{1}+2x_{2}\partial_{2}-4\eta\partial_{\eta}-1
\]
and consider the
system of partial differential equations 
\[
P_{j}\psi=0\quad (j=1,2,3,4).
\]
To be more specific, let $\mathcal D$ denote the Weyl algebra
of the variable $(x_{1},x_{2},\eta)$ and $\mathcal I$ the left ideal in $\mathcal D$ 
generated by $P_{j}$  ($j=1,2,3,4$). 
We can prove the following theorem (cf. \cite{Oaku2}):
\begin{thm}\label{7:M}
Let ${\mathcal M}$ denote the left $\mathcal D$-module defined by ${\mathcal I} :$
\[
{\mathcal M}={\mathcal D}/{\mathcal I}.
\]
Then ${\mathcal M}$ is a holonomic system of rank $3$.
\end{thm}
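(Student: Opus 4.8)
The plan is to prove holonomicity by bounding the characteristic variety from above, with Bernstein's inequality supplying the matching lower bound, and then to pin down the rank by a two-sided estimate: an upper bound obtained by reducing $\mathcal M$ explicitly after inverting $\eta$, and a lower bound obtained by exhibiting three independent integral solutions of Pearcey type.

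For holonomicity, I would write $(\xi_{1},\xi_{2},\zeta)$ for the symbols dual to $(\partial_{1},\partial_{2},\partial_{\eta})$ and compute the principal symbols of the generators with respect to the order filtration of $\mathcal D$: $\sigma(P_{1})=4\xi_{1}\xi_{2}$, $\sigma(P_{2})=4\xi_{2}^{2}$, $\sigma(P_{3})=-\xi_{1}^{2}$ (the term $\eta\partial_{2}$ of $P_{3}$ being of order $1$, hence lower than $\partial_{1}^{2}$), and $\sigma(P_{4})=3x_{1}\xi_{1}+2x_{2}\xi_{2}-4\eta\zeta$. Their common zero set in $T^{*}\mathbb C^{3}$ forces $\xi_{1}=\xi_{2}=0$, whence $\eta\zeta=0$, so that $\mathrm{Ch}(\mathcal M)$ is contained in the union of the zero section with the conormal bundle of $\{\eta=0\}$, a variety of dimension $3$; hence $\dim\mathrm{Ch}(\mathcal M)\le 3$. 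Since the convergent Pearcey integral \eqref{7:Pearcey-int} is a nonzero solution of all four operators, $1\notin\mathcal I$, so $\mathcal M\neq 0$ and Bernstein's inequality gives $\dim\mathrm{Ch}(\mathcal M)\ge 3$; therefore $\mathcal M$ is holonomic.

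For the rank I would use that the holonomic rank equals $\dim_{K}\bigl(K\langle\partial_{1},\partial_{2},\partial_{\eta}\rangle/\mathcal I_{K}\bigr)$, where $K=\mathbb C(x_{1},x_{2},\eta)$ and $\mathcal I_{K}$ is the left ideal generated by $P_{1},P_{2},P_{3},P_{4}$ over $K$. Since $\eta$ is a unit in $K$, the relations \eqref{7:p1-q1q2}, \eqref{7:p2-q1q2}, \eqref{7:q1-p1p3} together with $P_{3}=Q_{2}$ show that $\mathcal I_{K}$ is generated by $Q_{1}=4\partial_{1}^{3}+2x_{2}\eta^{2}\partial_{1}+x_{1}\eta^{3}$, $Q_{2}=\eta\partial_{2}-\partial_{1}^{2}$ and $P_{4}$. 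Modulo $Q_{2}$ one eliminates $\partial_{2}$ (substituting $\partial_{2}\equiv\eta^{-1}\partial_{1}^{2}$), so every class of $\mathcal M_{K}$ is represented as $A\cdot 1$ with $A\in K\langle\partial_{1},\partial_{\eta}\rangle$, and $P_{4}$ becomes $\widetilde P_{4}=3x_{1}\partial_{1}+2\eta^{-1}x_{2}\partial_{1}^{2}-4\eta\partial_{\eta}-1$. The relation $Q_{1}\cdot 1=0$, whose leading coefficient in $\partial_{1}$ is the unit $4$, expresses every $\partial_{1}^{a}\partial_{\eta}^{c}\cdot 1$ with $a\ge 3$ as a $K$-combination of monomials $\partial_{1}^{a'}\partial_{\eta}^{c'}\cdot 1$ with $a'\le 2$ and $c'\le c$; then $\widetilde P_{4}\cdot 1=0$, whose leading coefficient in $\partial_{\eta}$ is the unit $-4\eta$, expresses every monomial with $\partial_{\eta}$-order $c\ge 1$ as a $K$-combination of monomials with $\partial_{\eta}$-order $<c$ and $\partial_{1}$-order $\le 2$. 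A descending induction on the $\partial_{\eta}$-order then yields that $\mathcal M_{K}$ is spanned over $K$ by $1$, $\partial_{1}\cdot 1$, $\partial_{1}^{2}\cdot 1$, so $\operatorname{rank}(\mathcal M)\le 3$.

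For the reverse inequality I would recall that the rank of a holonomic module equals the dimension of the space of germs of holomorphic solutions at a generic point, and then produce three independent solutions of $P_{j}\psi=0$ $(j=1,2,3,4)$. For generic $(x_{1},x_{2})$ the quartic phase $t^{4}+x_{2}t^{2}+x_{1}t$ has three distinct critical points with distinct critical values, and integrating $\exp\bigl(\eta(t^{4}+x_{2}t^{2}+x_{1}t)\bigr)$ over the three associated steepest-descent contours gives solutions $v_{1},v_{2},v_{3}$; they satisfy $P_{1},P_{2},P_{3}$ by the integration-by-parts computations of $P_{1}v$, $P_{2}v$, $P_{3}v$ displayed above and $P_{4}$ by the homogeneity \eqref{7:homog}, and they are linearly independent because their $\eta\to+\infty$ asymptotics are controlled by the three distinct critical values. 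Hence $\operatorname{rank}(\mathcal M)=3$. (Alternatively the lower bound follows from the known rank $3$ of the classical Pearcey system defined by $Q_{1},Q_{2}$, cf.\ \cite{OK}, since the symbol computation above shows that each hypersurface $\{\eta=\mathrm{const}\}$ is non-characteristic for $\mathcal M$, so restriction to it preserves the rank.) The step I expect to be the main obstacle is the rank computation — making the reductions by $Q_{1}$ and $\widetilde P_{4}$ fully rigorous and verifying that together they cut the rank down to exactly $3$, neither less nor more; the lower-bound half via the three thimbles is precisely what rules out a further collapse. As an alternative, the entire rank computation can be carried out by a Gr\"obner basis calculation in the Weyl algebra along the lines of \cite{Oaku2}.
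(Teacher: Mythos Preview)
The paper does not actually prove Theorem~\ref{7:M}: it introduces the result with ``We can prove the following theorem (cf.\ \cite{Oaku2})'' and elsewhere announces that details for this section are deferred to \cite{asu}. So there is no proof in the paper to compare against; the only hints are the reference to Oaku's algorithmic computation of characteristic varieties and the remark immediately following the theorem that the four valleys of the phase yield three independent integral solutions spanning the solution space.

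Your argument is sound and matches those hints well. The principal symbols are computed correctly (in particular $\sigma(P_{3})=-\xi_{1}^{2}$, since $\eta$ is a base variable here), and the resulting bound $\mathrm{Ch}(\mathcal M)\subset\{\xi_{1}=\xi_{2}=0,\ \eta\zeta=0\}$ together with Bernstein's inequality gives holonomicity. For the rank, the reduction over $K=\mathbb C(x_{1},x_{2},\eta)$ using $Q_{1},Q_{2},P_{4}$ is the right idea; one should just be mindful that eliminating $\partial_{2}$ via $Q_{2}$ and eliminating $\partial_{\eta}$ via $P_{4}$ interact with the commutator $[\partial_{\eta},\eta]=1$, so the inductive reduction needs a well-ordering (e.g.\ lexicographic on $(\text{$\partial_{\eta}$-degree},\text{$\partial_{2}$-degree},\text{$\partial_{1}$-degree})$) to terminate --- or, as you say, one simply runs a Gr\"obner basis computation in the Weyl algebra as in \cite{Oaku2}. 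The lower bound via three Lefschetz thimbles is exactly what the paper invokes after the theorem. In short, your proposal is a correct and self-contained proof of a statement that the paper only asserts.
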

Thus ${\mathcal M}$ characterizes the 3-dimensional linear subspace spanned by \eqref{7:Pearcey-int}
in the space of analytic functions. Note that there are four valleys of the integral \eqref{7:Pearcey-int} and
hence six infinite paths of integration connecting distinct two valleys. Any three of them are independent, 
which give a basis of the solution space.

Next we construct WKB solutions to ${\mathcal M}$. We set $S=\partial_{1}\psi/\psi$ and $T=\partial_{2}\psi/\psi$.
Since we have  \eqref{7:p1-q1q2} and \eqref{7:p2-q1q2}, we can use \eqref{7:Pearcey0} to find $S, T$. 
That is, we see that $S$ and $T$ should satisfy
\[
4S^{3}+2\eta^{2}x_{2}S+\eta^{3}x_{1}+12S\partial_{1}S+4\partial_{1}^{2}S=0
\]
and
\[
\eta T-\partial_{1}S-S^{2}=0.
\]
We seek formal solutions to these equations of the forms
\[
S=\sum_{k=-1}^{\infty}\eta^{-k}S_{k},\quad 
T=\sum_{k=-1}^{\infty}\eta^{-k}T_{k}.
\]
Putting these expressions into the above equations, we see that 
$S_{k}$ and $T_{k}$ are obtained by
the following 
recursion relations and initial conditions:
\begin{align}\label{7:cubic-eq}
4&S_{-1}^{3}+2x_{2}S_{-1}+x_{1}=0,\\
S_{0}&=-\frac{1}{2}\partial_{1}\log(6S_{-1}^{2}+x_{2}),\\
S_{k}&=-\frac{2}{6S_{-1}^{2}+x_{2}}\left(
\sum_{\underset{ -1\le k_{1},\, k_{2},\, k_{3}<k}{k_{1}+k_{2}+k_{3}=k-2}}
S_{k_{1}}S_{k_{2}}S_{k_{3}}\right.\\ \nonumber
&\hspace{40mm}\left.
+\,3\sum_{\underset{ -1\le k_{1},\, k_{2}<k}{k_{1}+\,k_{2}=k-2}}S_{k_{1}}\partial_{1}S_{k_{2}}
+\partial_{1}^{2}S_{k-2}\right)\ \ (k\geq 1),\\
T_{-1}&=S_{-1}^{2}, \\
T_{k}&=\partial_{1}S_{k-1}+\sum_{j=-1}^{k}S_{j}S_{k-j-1}\ \ 
(k\geq 0).
\end{align}
This construction is the same as that given in \cite{a, h} and hence the 1-form of formal series
\[
\omega=Sdx_{1}+Tdx_{2}
\]
is closed. In these references, a formal solution of the form
\[
\psi=\eta^{-1/2}\exp\left(\int_{(a_{1},a_{2})}^{(x_{1},x_{2})}\omega\right)
\]
is called a WKB solution to \eqref{7:Pearcey0}. Here $(a_{1},a_{2})$ is a suitably fixed point.

Now we consider the WKB solutions to ${\mathcal M}$ of the form
\[
\psi=\eta^{-1/2}\exp\left(\int\omega\right).
\]
In addition to \eqref{7:Pearcey-1} (or \eqref{7:Pearcey0}), 
$\psi$ should satisfy \eqref{7:homog} and hence the choice of the primitive of $\displaystyle \omega
=\sum_{k=-1}^{\infty}\eta^{-k}\omega_{k}=\sum_{k=-1}^{\infty}\eta^{-k}(S_{k}dx_{1}+T_{k}dx_{2})$ is
constrained by this equation (up to genuine additive constants), 
namely, 
\begin{equation}\label{7:primitives}
\left\{
\begin{split}
\int \omega_{0}&=-\frac{1}{2}\log(6S_{-1}^{2}+x_{2}),\\
\int\omega_{k}&=-\frac{1}{4k}(3x_{1}S_{k}+2x_{2}T_{k})\quad(k\neq 0).
\end{split}\right.
\end{equation}
This choice is consistent with the construction of $S_{k}$ and $T_{k}$. 
In fact, we can confirm the first equation of \eqref{7:primitives} by direct computation and the second 
by using the homogeneity of $S_{k}$ and $T_{k}$.
From now on, we take special WKB solutions of the form
\begin{equation}\label{7:WKBsol}
\psi=\frac{1}{\left(\eta(6S_{-1}^{2}+x_{2})\right)^{1/2}}\exp\left(\eta\int\omega_{-1}+\sum_{k=1}^{\infty}\eta^{-k}\int\omega_{k}\right)
\end{equation}
with the primitives given by \eqref{7:primitives}. Let $S_{-1}^{(j)}$ ($j=1,2,3$) denote the three roots of \eqref{7:cubic-eq} and set 
$T_{-1}^{(j)}=(S_{-1}^{(j)})^{2}$. According to this choice, we have three WKB solutions $\psi_{j}$ ($j=1,2,3$) of the form
\eqref{7:WKBsol}.

Let $\psi_{j, B}$ be the Borel transform of $\psi_{j}$  ($j=1,2,3$) and $P_{k,B}$ the formal Borel transform of $P_{k}$
($k=1,2,3,4$) .
The explicit forms of $P_{k,B}$'s are given as follows:
\begin{align*}
P_{1,B}&=4 \partial_{1}\partial_{2} + 2 x_2\partial_{y}\partial_{1}
 + x_1\partial_{y}^2,\\
P_{2,B}&=4 \partial_{2}^2 +  x_1\partial_{y}\partial_{1} + 2 x_2\partial_{y} \partial_{2} +\partial_{y},\\
P_{3,B}&=\partial_{y}\partial_{2}-\partial_{1}^{2},\\
P_{{4,B}}&=3x_{1}\partial_{1}+2x_{2}\partial_{2}-4\partial_{y}(-y)-1\\
(&=3x_{1}\partial_{1}+2x_{2}\partial_{2}+4y\partial_{y}+3).
\end{align*}
Then $P_{k,B}\psi_{j,B}=0$ holds for $j=1,2,3; k=1,2,3,4$. We denote by ${\mathcal I}_{B}$ the left ideal
in ${\mathcal D}_{B}$ generated by $P_{k, B}$ ($k=1,2,3,4$). Here ${\mathcal D}_{B}$ denotes the Weyl
algebra of the variable $(x_{1},x_{2},y)$. Then the following theorem can be proved in a similar manner to
Theorem \ref{7:M}
\begin{thm}\label{7:MB}
Let ${\mathcal M}_{B}$ denote the left ${\mathcal D}_{B}$-module defined by ${\mathcal I}_{B} :$
\[
{\mathcal M}_{B}={\mathcal D}_{B}/{\mathcal I}_{B}.
\]
Then ${\mathcal M}_{B}$ is a holonomic system of rank $3$.
\end{thm}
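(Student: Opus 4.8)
The plan is to imitate the proof of Theorem~\ref{7:M} in the Weyl algebra $\mathcal{D}_B$ of $(x_1,x_2,y)$: compute a Gr\"obner basis of $\mathcal{I}_B$, read off its characteristic variety to see that it is $3$-dimensional (so $\mathcal{M}_B$ is holonomic), and then localize at the generic point and count standard monomials to get rank $3$. There is a structural shortcut for the first half: since the formal Borel transform in $\eta$ replaces $\eta$ by $\partial_y$ and $\partial_\eta$ by $-y$, the operators $P_{k,B}$ are the images of the $P_k$ under the partial Fourier automorphism of the Weyl algebra, so $\mathcal{M}_B$ is the partial Fourier transform of $\mathcal{M}$ in the $\eta$-variable, and since partial Fourier transform preserves holonomicity, Theorem~\ref{7:M} already shows $\mathcal{M}_B$ is holonomic. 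The Fourier automorphism does not respect the order filtration, however, so it does not transport the rank, which has to be computed by hand.

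For a self-contained treatment of holonomicity, let $(\xi_1,\xi_2,\zeta)$ be the symbols of $(\partial_1,\partial_2,\partial_y)$. The principal symbols
\[
\sigma(P_{1,B})=4\xi_1\xi_2+2x_2\zeta\xi_1+x_1\zeta^2,\qquad
\sigma(P_{2,B})=4\xi_2^2+x_1\zeta\xi_1+2x_2\zeta\xi_2,
\]
\[
\sigma(P_{3,B})=\zeta\xi_2-\xi_1^2,\qquad
\sigma(P_{4,B})=3x_1\xi_1+2x_2\xi_2+4y\zeta
\]
all lie in $\operatorname{gr}\mathcal{I}_B$, so $\operatorname{Char}(\mathcal{M}_B)$ is contained in their common zero set $Z\subset T^*\mathbb{C}^3$. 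For $(x_1,x_2,y)\neq 0$ the linear form $\sigma(P_{4,B})$ is nonzero; using it to eliminate $\zeta$ turns $\sigma(P_{3,B})$ into the conic $4y\xi_1^2+3x_1\xi_1\xi_2+2x_2\xi_2^2$ and $\sigma(P_{1,B})$ into a second quadratic form, and for generic $(x_1,x_2,y)$ these two are not proportional, so the fibre of $Z\to\mathbb{C}^3$ is the single point $\{0\}$ over a Zariski-dense open set; moreover $\sigma(P_{3,B})$ is never divisible by the linear form $\sigma(P_{4,B})$, so every fibre has dimension $\le 1$. A dimension count over the base (the locus where the fibre is positive-dimensional is proper, hence of dimension $\le 2$, and carries fibres of dimension $\le 1$) then gives $\dim Z\le 3$, hence $\dim\operatorname{Char}(\mathcal{M}_B)\le 3$, and Bernstein's inequality (applicable because $\mathcal{M}_B\neq 0$) forces $\dim\operatorname{Char}(\mathcal{M}_B)=3$: $\mathcal{M}_B$ is holonomic, its characteristic variety being the union of the zero section and the conormal bundle of the surface along which the $\psi_{j,B}$ are singular in $y$.

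For the rank, pass to $R_B=\mathbb{C}(x_1,x_2,y)\langle\partial_1,\partial_2,\partial_y\rangle$. There $(4y)^{-1}P_{4,B}=\partial_y+\Lambda$ with $\Lambda=\tfrac{1}{4y}(3x_1\partial_1+2x_2\partial_2+3)$ free of $\partial_y$, so modulo $R_B\mathcal{I}_B$ every occurrence of $\partial_y$ may be replaced by $-\Lambda$, and $\mathcal{M}_B$ localized becomes a cyclic module over $\mathbb{C}(x_1,x_2,y)\langle\partial_1,\partial_2\rangle$. Substituting $\partial_y\equiv-\Lambda$ into $P_{3,B}$ gives the relation $4y\partial_1^2+3x_1\partial_1\partial_2+2x_2\partial_2^2+3\partial_2\equiv 0$, and into $P_{1,B}$ and $P_{2,B}$ two further relations of order $\le 2$ in $\partial_1,\partial_2$; their leading forms span the space of order-$2$ monomials in $\partial_1,\partial_2$ over $\mathbb{C}(x_1,x_2,y)$, so every $\partial_1^i\partial_2^j$ with $i+j\ge 2$ reduces to a $\mathbb{C}(x_1,x_2,y)$-combination of $1,\partial_1,\partial_2$, which gives $\operatorname{rank}\mathcal{M}_B\le 3$. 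For the reverse inequality, the three WKB solutions $\psi_j$ attached to the three roots of $4S_{-1}^3+2x_2S_{-1}+x_1=0$ have pairwise distinct leading exponentials, so their Borel transforms $\psi_{j,B}$, all annihilated by $\mathcal{I}_B$, are linearly independent over $\mathbb{C}$; hence $\operatorname{rank}\mathcal{M}_B=3$.

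The main obstacle is the bookkeeping in the two Gr\"obner computations that make the steps above rigorous, both modelled on the proof of Theorem~\ref{7:M} (cf.\ \cite{Oaku2}): one in $\mathcal{D}_B$, to be sure that $\operatorname{gr}\mathcal{I}_B$ — which may strictly contain the ideal of the four displayed symbols — still cuts out a variety of dimension exactly $3$, i.e.\ to rule out a component of $\operatorname{Char}(\mathcal{M}_B)$ of dimension $>3$ over the exceptional locus of $Z\to\mathbb{C}^3$ (contained in $\{y=0\}$ together with the relevant discriminant loci); and one over $\mathbb{C}(x_1,x_2,y)$, to confirm that the three reduced relations generate the whole localized ideal, so that the quotient is exactly $3$-dimensional — here the lower bound $\operatorname{rank}\mathcal{M}_B\ge 3$ already guarantees that nothing has been lost. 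Everything else — the symbol formulas, the rewriting $\partial_y\equiv-\Lambda$, and the identities $P_{k,B}\psi_{j,B}=0$ — is direct calculation.
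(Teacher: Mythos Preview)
The paper does not actually give a proof of this theorem: it only says that it ``can be proved in a similar manner to Theorem~\ref{7:M}'', and Theorem~\ref{7:M} itself is merely stated with a reference to \cite{Oaku2}, i.e.\ Gr\"obner-basis computation of the characteristic variety and rank. Your proposal is therefore considerably more detailed than anything the paper provides, and is fully consistent with the approach the paper points to.

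Two remarks on what you add beyond the paper. First, your observation that the formal Borel transform $\eta\mapsto\partial_y$, $\partial_\eta\mapsto -y$ is precisely the partial Fourier automorphism of the Weyl algebra, so that holonomicity of $\mathcal{M}_B$ is inherited from $\mathcal{M}$ via Theorem~\ref{7:M}, is a clean structural shortcut the paper does not mention; this legitimately avoids redoing the characteristic-variety computation. Second, your explicit elimination of $\partial_y$ via $P_{4,B}$ and the resulting relation $4y\partial_1^2+3x_1\partial_1\partial_2+2x_2\partial_2^2+3\partial_2\equiv 0$ is exactly the right move for the rank computation over $\mathbb{C}(x_1,x_2,y)$; completing it (showing that the three reduced second-order relations have independent leading forms for generic $(x_1,x_2,y)$) is straightforward bookkeeping, as you note. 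Your lower bound via the three $\psi_{j,B}$ is also fine, though strictly speaking one should observe that their singular loci in $y$ are the three distinct branches $y=-(S_{-1}^{(j)})^4-x_2(S_{-1}^{(j)})^2-x_1S_{-1}^{(j)}$, which already forces linear independence without appealing to the later algebraicity result.
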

Thus ${\mathcal M}_{B}$ characterizes the subspace of analytic functions spanned by $\psi_{j,B}$ ($j=1,2,3$).

\

We go back to \eqref{7:Pearcey-int}. We set $t^{4}+x_{2} t^{2}+x_{1} t=-y$ and rewrite \eqref{7:Pearcey-int} as
\[
v=\int \exp(-\eta y)g(x_{1},x_{2},y)dy.
\]
Here $g$ is defined by
\[
g(x_{1},x_{2},y)=\left.\frac{1}{4 t^{3}+2x_{2}t+x_{1}}\right|_{t=t(x_{1},x_{2})}
\]
and the path of integration is suitably modified. The following lemma can be proved in a similar way to
the proof of Lemma \ref{3:Lemma1}.

\begin{lmm}
The function $g$ defined as above satisfies the quadratic equation
\[
(4 x_1^2 x_2(36  y-x_2^2)+16 y(x_2^2-4 y)^2-27 x_1^4)\, g^4+2(-8x_2 y+2 x_2^3+9 x_1^2)\,g^2 -8x_1g+1=0
\]
and it is a solution to the holonomic system ${\mathcal M}_{B}$.
\end{lmm}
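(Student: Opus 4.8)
The plan is to mimic exactly the two-step structure used for the Airy case (Lemma \ref{3:Lemma1} together with Proposition \ref{3:sys-for-g}), now adapted to the quartic phase. First I would derive the algebraic equation for $g$. Starting from the two relations
\[
(4t^{3}+2x_{2}t+x_{1})\,g=1,\qquad t^{4}+x_{2}t^{2}+x_{1}t=-y,
\]
I would eliminate $t$. The cleanest route is to use the first relation to express successive powers of $t$ as polynomials in $g^{-1}$ over the ring of polynomials in $(x_{1},x_{2})$ — equivalently, to form the resultant $\mathrm{Res}_{t}\big((4t^{3}+2x_{2}t+x_{1})g-1,\; t^{4}+x_{2}t^{2}+x_{1}t+y\big)$. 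This resultant is a polynomial in $g$ of degree at most $4$ (since the second polynomial has degree $4$ in $t$ and $g$ appears only through the first factor), and collecting its coefficients in $g$ — which is a finite, purely mechanical computation — should reproduce exactly the stated quartic
\[
\bigl(4x_{1}^{2}x_{2}(36y-x_{2}^{2})+16y(x_{2}^{2}-4y)^{2}-27x_{1}^{4}\bigr)g^{4}+2(-8x_{2}y+2x_{2}^{3}+9x_{1}^{2})g^{2}-8x_{1}g+1=0.
\]
A quick sanity check is that the leading coefficient is (up to sign and normalization) the discriminant of the quartic $t^{4}+x_{2}t^{2}+x_{1}t+y$ in $t$, since $g$ blows up exactly where the critical point collides with another branch; this matches the known swallowtail discriminant of the Pearcey phase and lets one verify the coefficient without re-expanding from scratch.

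Having the quartic, the second assertion — that $g$ solves $\mathcal{M}_{B}$ — is proved exactly as Proposition \ref{3:sys-for-g} was. By Theorem \ref{7:MB} it suffices to check $P_{k,B}g=0$ for $k=1,2,3,4$. Differentiating the quartic equation once and twice with respect to $x_{1}$, $x_{2}$ and $y$, one solves linearly for $\partial_{1}g,\partial_{2}g,\partial_{y}g$ and then for the second derivatives $\partial_{1}^{2}g,\partial_{1}\partial_{2}g,\partial_{2}^{2}g,\partial_{y}^{2}g,\partial_{y}\partial_{1}g,\partial_{y}\partial_{2}g$, each as a rational function of $g$ (with denominators that are powers of $8g(\,\cdot\,)g^{2}+\cdots$, the derivative of the quartic in $g$). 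Substituting these expressions into $P_{1,B}g$, $P_{2,B}g$, $P_{3,B}g$, $P_{4,B}g$ and reducing modulo the quartic, each must vanish identically. The homogeneity equation $P_{4,B}g=0$ can alternatively be checked directly from the weighted homogeneity $g(\lambda^{3}x_{1},\lambda^{2}x_{2},\lambda^{4}y)=\lambda^{-3}g(x_{1},x_{2},y)$, which is visible from the defining relations $t\mapsto\lambda t$, $y\mapsto\lambda^{4}y$, and this provides an independent consistency check on the rational expressions for $\partial_{1}g,\partial_{2}g,\partial_{y}g$.

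The main obstacle will be the bookkeeping in the elimination: the resultant of a cubic-times-$g$ and a quartic is a sizeable symbolic expression, and getting the coefficient $4x_{1}^{2}x_{2}(36y-x_{2}^{2})+16y(x_{2}^{2}-4y)^{2}-27x_{1}^{4}$ exactly right — signs, the mixed term $4x_{1}^{2}x_{2}\cdot 36y$, the absence of a $g^{3}$ and a $g^{1}$ term of certain shapes — requires care. I would carry this out with a computer algebra system and then double-check against the discriminant interpretation mentioned above and against a few numerical specializations (e.g. $x_{1}=0$, where the quartic in $t$ is biquadratic and everything can be done by hand). The verification that $P_{k,B}g=0$ is then entirely routine given the quartic, so, as in the Airy case, I would simply state that it follows by differentiating the algebraic equation and reducing, referring the reader to \cite{asu} for the explicit intermediate formulas.
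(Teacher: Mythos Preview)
Your proposal is correct and follows essentially the same approach as the paper: the paper merely says the lemma ``can be proved in a similar way to the proof of Lemma~\ref{3:Lemma1}'', i.e.\ eliminate $t$ from the two defining relations to obtain the quartic in $g$, and then (as in Proposition~\ref{3:sys-for-g}) differentiate the algebraic equation and reduce to verify the differential equations. Your added observations --- using the resultant explicitly, the discriminant interpretation of the leading coefficient, and the direct homogeneity check for $P_{4,B}$ --- are helpful elaborations but do not depart from the paper's method.
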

We note that the singular locus of ${\mathcal M}_{B}$ coincides with the zero-point set of the leading coefficient
of the above quadratic equation. 
For general $(x_{1},x_{2},y)$, there are four roots $g_{k}$ $(k=1,2,3,4)$ of
the quadratic equation, which satisfy $g_{1}+g_{2}+g_{3}+g_{4}=0$. Looking at the singularity of $g_{k}$,
we find that any three of $g_{k}$'s are linearly independent. Thus we have the following theorem.
\begin{thm}
The Borel transform $\psi_{j,B}$ of the WKB solution $\psi_{j}$ $(j=1,2,3)$ can be
written as a linear combination of any three of $g_{k}$'s. In particular, $\psi_{j,B}$'s are
algebraic and hence they are resurgent.
\end{thm}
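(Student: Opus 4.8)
The plan is to proceed exactly as in Proposition \ref{4:wkb-g} and Theorem \ref{4:psipmandgj}, now for the holonomic system ${\mathcal M}_{B}$. First I would fix a generic point $(x_{1}^{0},x_{2}^{0},y^{0})$ lying off the singular locus of ${\mathcal M}_{B}$ (which, as noted, is the zero set of the leading coefficient of the quartic equation). By Theorem \ref{7:MB} the space of germs of holomorphic solutions of ${\mathcal M}_{B}$ at that point is $3$-dimensional. By the preceding Lemma each of the four branches $g_{1},g_{2},g_{3},g_{4}$ of the algebraic function defined by the quartic equation is such a solution; since $g_{1}+g_{2}+g_{3}+g_{4}=0$ while any three of them are linearly independent, the four span a $3$-dimensional subspace, which is therefore the whole solution space, and any three of the $g_{k}$'s form a basis of it. Because $P_{k,B}\psi_{j,B}=0$ for $k=1,2,3,4$, each $\psi_{j,B}$ is a solution of ${\mathcal M}_{B}$, hence a linear combination of any chosen three of the $g_{k}$'s.

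The step that needs care — and where the real work lies — is that $\psi_{j,B}$ is a priori only a formal series carrying an algebraic prefactor, so one must check that it actually coincides with such a combination rather than merely solving the system formally, and one must pin down the coefficients. I would do this exactly as in \S 4: locate the singularities of $\psi_{j,B}$ in the $y$-variable, namely the moving points produced by the leading exponent $\eta\int\omega_{-1}$ with $S_{-1}=S_{-1}^{(j)}$ running over the three roots of the cubic $4S_{-1}^{3}+2x_{2}S_{-1}+x_{1}=0$ (the analogues of $\pm\tfrac{2}{3}x^{3/2}$ in the Airy case); near such a point the form \eqref{7:WKBsol} shows that $\psi_{j,B}$ has a fractional-power expansion, and I would match it against the expansion of the branch $g_{k}$ exhibiting the same singular behaviour at the corresponding point of the singular locus, comparing leading coefficients. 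This is the direct counterpart of the computation that gave $\psi_{+,B}=(g_{1}-g_{2})/\sqrt{\pi}$ and $\psi_{-,B}=i(g_{1}-g_{3})/\sqrt{\pi}$; it simultaneously produces an honest analytic combination $\sum_{k}c_{k}g_{k}$ whose expansion agrees with the formal series $\psi_{j,B}$ to all orders, and, by the uniqueness of such an expansion for a solution of the finite-rank holonomic system, forces $\psi_{j,B}=\sum_{k}c_{k}g_{k}$.

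Finally, a finite linear combination of branches of an algebraic function is again algebraic, so $\psi_{j,B}$ is algebraic; and an algebraic function of $y$ is holomorphic off a finite set of algebraic branch points and admits analytic continuation along every path avoiding that set, which is precisely the endless continuability required for resurgence. Hence the WKB solutions $\psi_{j}$ are resurgent. The main obstacle is the branch-and-coefficient bookkeeping in the second step — deciding, for each $j$, which three of the $g_{k}$'s occur and with which coefficients via the local analysis at the moving singularities — together with carrying out the formal-to-analytic identification cleanly; the two structural inputs it rests on, the rank-$3$ holonomicity of ${\mathcal M}_{B}$ (Theorem \ref{7:MB}) and the linear independence of every triple of the $g_{k}$'s, are themselves computational but are taken here as already established.
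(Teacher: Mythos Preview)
Your proposal is correct and follows essentially the same route as the paper: the paper's argument for this theorem is the one-line deduction that since ${\mathcal M}_{B}$ has rank $3$ (Theorem \ref{7:MB}), the four branches $g_{k}$ satisfy $g_{1}+g_{2}+g_{3}+g_{4}=0$, and any three of them are linearly independent, they span the solution space and hence each $\psi_{j,B}$ is a linear combination of any three. Your treatment is in fact more careful than the paper's, which defers both the formal-to-analytic identification and the explicit determination of the coefficients to \cite{asu}; your plan to carry these out by matching local expansions at the moving singularities, in parallel with \S 4, is exactly the intended method.
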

We can write down $\psi_{j, B}$ in terms of $g_{k}$'s. Explicit forms will be given in \cite{asu}.



\end{document}